\documentclass[12pt]{amsart}
\usepackage{dsfont}
\usepackage{amsmath}
\usepackage{amsthm}
\usepackage{amssymb}
\usepackage{multirow}
\usepackage{mathtools}
\usepackage{indentfirst}  
\usepackage{graphicx}
\usepackage{color}
\usepackage{hyperref}

\usepackage{comment}

\newtheorem{theorem}{Theorem}
\newtheorem{lemma}{Lemma}

\newtheorem{conjecture}{Conjecture}
\theoremstyle{remark}
\newtheorem{remark}{Remark}

\newcommand{\Z}{\mathbb{Z}}
\newcommand{\R}{\mathbb{R}}
\newcommand{\Hh}{\mathbb{H}}
\newcommand{\SL}{\mathrm{SL}}

\renewcommand{\pmod}[1]{\mkern3mu(\mathrm{mod}\ #1)}

\title{On the hyperbolic prime number theorem}
\author{Alisa Sedunova}
\email{alisa.sedunova@gmail.com}

\begin{document}

\begin{abstract}
	Friedlander and Iwaniec proved that the number of points of the orbit $\{\gamma i \colon \gamma\in\SL_2(\Z)\}$ lying at a distance $p-2$ from the origin $i$ of the upper half-plane, with $p\le x$ prime, is of order $x/\log x$; the upper bound is unconditional, while the lower one rests on a strong hypothesis concerning the distribution of primes in arithmetic progressions. We consider instead square-free distances and prove unconditionally an asymptotic formula, whose main term is of order $x$. Employing the weighted linear sieve, we also show unconditionally that the square-free distances $n$ with at most $7$ prime factors contribute $\gg x/\log x$. Finally, assuming that the sequence $r(n-2)r(n+2)$ has level of distribution $x^{\theta}$ in arithmetic progressions, where $r(n)$ is the number of ways to write $n$ as a sum of two squares, we obtain the same lower bound for the distances with at most $N$ prime factors (the value $7$ corresponds to $\theta=1/6$).
\end{abstract}

\maketitle

\tableofcontents

\section{Introduction}

The prime number theorem states that as $x\to\infty$ we have
\[
    \pi(x) = \sum_{p\le x}1 \sim \int_2^x\frac{dt}{\log t}.
\]

In this paper we consider an analogous problem for the hyperbolic plane. 
Let $\Hh=\{z=x+iy:\ x\in\R,\ y\in\R^+\}$ be the upper half-plane, acted on by the modular group
\[
    \Gamma=\SL_2(\Z)=\left\{\begin{pmatrix}x_1&x_2\\ x_3&x_4\end{pmatrix}:\ x_1,x_2,x_3,x_4\in\Z,\ x_1x_4-x_2x_3=1\right\}
\]

by fractional linear transformations. As a distance function on $\Hh$ we take
\[
    u(z,w)=\frac{|z-w|^2}{\operatorname{Im}z\operatorname{Im}w},
\]

and we choose $z=i$ as the origin, the orbit $\{\gamma i:\gamma\in\Gamma\}$ plays the role of the integers. 
Similarly to \cite[(1.8)]{MR2486486} a direct computation gives
\[
    u(\gamma i,i)+2=x_1^2+x_2^2+x_3^2+x_4^2\qquad\text{for }\gamma=\begin{pmatrix}x_1&x_2\\ x_3&x_4\end{pmatrix}\in\Gamma.
\]

Following \cite{MR2486486}, for a positive integer $n$ we say that $\gamma i$ is at distance $n-2$ from the origin if $x_1^2+x_2^2+x_3^2+x_4^2=n$.

In \cite{MR2486486} Friedlander and Iwaniec studied the number $\pi_\Gamma(x)$ of orbit points at a prime distance $p-2$, $p\le x$. 
They showed that $\pi_\Gamma(x)\asymp x/\log x$ (here $f\asymp g$ means that $c_1 g \le f \le c_2 g$ for some positive absolute constants $c_1$, $c_2$) under the following hypothesis on primes in arithmetic progressions, which interpolates between the Bombieri--Vinogradov theorem and the Elliott--Halberstam conjecture.

\begin{conjecture}[Assumption $A(\theta)$]
    \label{conj}
    Let $E(x,Q)$ denote the remainder term of level $Q$ in Bombieri-Vinogradov theorem that is
	\[
        \begin{split}
            E(x, Q) &= \sum_{q \leq Q} \max_{(a,q)=1} \max_{y \leq x} \left|\psi(y;q,a)-\frac{y}{\varphi(q)}\right| \\
            &=  \sum_{q \leq Q} \max_{(a,q)=1} \max_{y \leq x} |E(y;q,a)|,
        \end{split}
	\]
	where $\varphi(q)$ is the Euler's totient function, $\psi(x;q,a)$ is the Chebyshev function given by
	\[
	    \psi(x;q,a) = \sum_{\substack{n \leq x \\ n \equiv a \pmod q}} \Lambda(n)
	\]
    and $\Lambda(n)$ is the von Mangoldt function. 
    Let also $0 < \theta \leq 1$. 
    Then the bound
    \begin{equation} \tag{$A(\theta)$}
        E(x,Q) \ll \frac{x}{(\log x)^A}
    \end{equation}
    holds for $Q=x^{\theta-\varepsilon}$, for every $\varepsilon >0$ and $A>0$ with an implied constant depending only on $\varepsilon$ and $A$.
\end{conjecture}

The case $\theta=1/2$ is the Bombieri--Vinogradov theorem and the case $\theta=1$ is the Elliott--Halberstam conjecture. 
By \cite{MR2486486} we have $\pi_\Gamma(x)\asymp x/\log x$ provided that $A(\theta)$ holds for some $\theta < 1$ sufficiently close to $1$ (the upper bound $\pi_\Gamma(x)\ll x/\log x$ being unconditional, hence $A(\theta)$ is essential only for the lower bound). 
A more general problem, with a polynomial in place of the quadratic form, was considered by Bourgain, Gamburd and Sarnak in \cite{MR2246331}.

In this paper we replace primes by square-free numbers. 
Let $s_\Gamma(x)$ be the number of points $\gamma i$ at a distance $n-2$ from the origin with $n \leq x$ and $n$ being free of squares, i.e. $s_{\Gamma}(x)$ is the number of quadruples $(x_1,x_2,x_3,x_4) \in \Z^4$ such that $x_1^2+x_2^2+x_3^2+x_4^2 = n \leq x$, $x_1x_4-x_2x_3=1$ and $\mu^2(n)=1$.
Here $\mu$ is the M\"obius function, so that $\mu^2(n)=1$ means that $n$ is square-free. Throughout the paper, $\chi_4$ denotes the non-principal character modulo $4$ and $r(m)$ denotes the number of representations of $m$ as a sum of two squares of integers. 
Our main unconditional result is the following.

\begin{theorem}\label{sqfree-theorem}
    For $x \ge 2$ and any $\varepsilon > 0$ we have
    \[
        s_{\Gamma}(x) = 6Cx + O(x^{11/12+ \varepsilon}),
    \]
    where
    \[
        C = \prod_{p > 2}\biggl(1-\frac{p-\chi_4(p)}{p^2(p+\chi_4(p))}\biggr), \quad 0 < C < 1.
    \]
\end{theorem}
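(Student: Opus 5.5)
Following Friedlander--Iwaniec, I start from the parametrisation of the counting function. Write $n=x_1^2+x_2^2+x_3^2+x_4^2$ with $x_1x_4-x_2x_3=1$. Then
\[
(x_1+x_4)^2+(x_2-x_3)^2=n+2,\qquad (x_1-x_4)^2+(x_2+x_3)^2=n-2 .
\]
Conversely, a pair of representations $n+2=a^2+b^2$ and $n-2=c^2+d^2$ with $a\equiv c$, $b\equiv d \pmod 2$ gives back an integer quadruple. The parity constraint is automatic when $n$ is odd: $n\pm2$ are then odd, so exactly one of $a,b$ is odd, and the matching can be arranged up to a fixed combinatorial factor. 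When $n$ is even and square-free we have $n\equiv2\pmod4$, so $n\pm2\equiv0\pmod 4$ and every representation of $n\pm 2$ has both coordinates even; again the constraint is automatic. So, up to an explicit weight depending only on $n \bmod 8$ (which I will compute carefully), the number of $\gamma$ with distance $n-2$ equals $r(n-2)r(n+2)$ times a constant. I will then aim for
\[
s_\Gamma(x)=\sum_{n\le x}\mu^2(n)\,w(n)\,r(n-2)r(n+2),
\]
with $w$ periodic mod $8$. The factor $6$ in the main term should come out of this normalisation together with the local factor at $2$.

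Next I expand $\mu^2(n)=\sum_{d^2\mid n}\mu(d)$ and split at a parameter $D$. For $d\le D$ I need the sums
\[
S_d(x)=\sum_{\substack{n\le x\\ n\equiv0\ (d^2)}} w(n)\,r(n-2)r(n+2),
\]
that is, the shifted convolution sum of $r(m)r(m+4)$ along the progression $m\equiv -2\pmod{d^2}$. I will invoke an asymptotic with power saving that is uniform in the modulus: $S_d(x)=\rho(d)\,x+O(d^{\alpha}x^{\beta})$. This can come from the spectral theory of automorphic forms (Kloosterman sums, or the Deshouillers--Iwaniec circle-method treatment of $r(m)r(m+h)$ in progressions), or, more elementarily, by writing $r=4(1*\chi_4)$, opening one divisor function, and using lattice point counts on the quadratic form for each divisor. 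The factor $\rho(d)$ is multiplicative. I will compute it from the local densities: for odd $p$ it is the density of $n\equiv0\pmod{p^2}$ weighted by $p$-adic densities of representations of $n\pm2$. Summing $\mu(d)\rho(d)$ then produces the Euler product $\prod_{p>2}\bigl(1-\frac{p-\chi_4(p)}{p^2(p+\chi_4(p))}\bigr)$. I will check that the local factor equals exactly $\frac{p-\chi_4(p)}{p^2(p+\chi_4(p))}$ after normalising by the unconditional density. This product converges absolutely, and since each factor lies in $(0,1)$ we get $0<C<1$.

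For $d>D$ I use a trivial-type bound. Since $r(m)\ll m^\varepsilon$, the tail is at most $x^{\varepsilon}\sum_{d>D}x/d^2\ll x^{1+\varepsilon}/D$. Balancing this against the accumulated error $\sum_{d\le D}d^{\alpha}x^{\beta}$ gives the exponent. With a uniform error of the shape $O(d\,x^{5/6+\varepsilon})$ (or an equivalent shape), the choice $D=x^{1/12}$ yields exactly $x^{11/12+\varepsilon}$, so I will aim to establish such a bound.

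The main obstacle is the uniform asymptotic for $\sum r(n-2)r(n+2)$ over $n\equiv0\pmod{d^2}$ with an explicit power saving in $x$ and polynomial dependence on $d$. I would first try the elementary route. Write each count as the number of integer points $(a,b,c,d')$ with $a^2+b^2-c^2-d'^2=4$ in a ball, subject to a congruence modulo $d^2$. Then apply the Kloosterman/Weil-bound treatment of indefinite quaternary forms, as in Heath-Brown's delta method, which gives error roughly $x^{3/4}$ times a power of the modulus. Failing that, I would fall back on the divisor-switching approach with Weil's bound for Kloosterman sums.
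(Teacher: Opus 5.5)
Your outline matches the paper's: pass to $r(n-2)r(n+2)$, expand $\mu^2(n)=\sum_{d^2\mid n}\mu(d)$, truncate with a trivial tail bound, and read off an Euler product. However, there is a genuine gap. The step that carries all the content is missing, and the quantitative part you do write down is wrong.

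\textbf{The key estimate is missing and your balancing fails.} You only ``invoke'' an asymptotic for $S_d(x)$ uniform in $d$, listing three possible methods without saying what any of them delivers. Your balancing does not work. An error $O(d\,x^{5/6+\varepsilon})$ summed over $d\le D=x^{1/12}$ gives $D^2x^{5/6+\varepsilon}=x^{1+\varepsilon}$, which is no saving at all. With that shape the best choice is $D=x^{1/18}$, giving $x^{17/18}$; your choice of $D$ would need $O(d\,x^{3/4+\varepsilon})$, or $O(x^{5/6+\varepsilon})$ uniformly in $d$. Moreover, the elementary divisor-switching route you mention, as carried out by Friedlander--Iwaniec, gives $x^{11/12+\varepsilon}$ already for $d=1$.

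\textbf{What actually produces $11/12$.} It is an error term that \emph{decays} in the modulus. The paper uses the refined form of Lemma~\ref{convr}: for odd $q$,
\[
\sum_{\substack{n\le x\\ n\equiv 3\ (\mathrm{mod}\ 4)\\ n\equiv 0\ (\mathrm{mod}\ q)}} r(n-2)r(n+2)=8g(q)x+O\bigl(q^{-1/2}x^{11/12+\varepsilon}+x^{1/2+\varepsilon}\bigr).
\]
This is proved in four steps.
\begin{enumerate}
\item Write $r(n-2)=8\sum_{k\mid n-2,\,k<\sqrt{n-2}}\chi_4(k)+4\chi_4(\sqrt{n-2})$, which is valid because $n-2\equiv1\pmod 4$. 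The truncation keeps the moduli $4qk$ below $4q\sqrt x$.
\item Interchange the summations.
\item For each $k<\sqrt x$, apply Lemma~\ref{rinAps} to $\sum r(m)$ over a progression modulo $4qk$, with error $(qk)^{-1/2}x^{2/3+\varepsilon}$.
\item Summing over $k$ gives $q^{-1/2}x^{11/12+\varepsilon}$.
\end{enumerate}
With $q=d^2$, the factor $d^{-1}$ makes the sum over $d\le y$ cost only $\log y$. So one may take $y=x^{1/4}$, and the tail is $O(x^{3/4+\varepsilon})$.

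\textbf{The even case needs its own lemma.} Your single sum $S_d$ hides this. For square-free even $n=4m+2$ one has $r(n-2)r(n+2)=r(m)r(m+1)$. The condition $d^2\mid n$ becomes $m\equiv b\pmod{d^2}$ with $b=(d^2-1)/2$ and $(b(b+1),d^2)=1$. Since the truncated divisor expansion needs an argument $\equiv 1\pmod 4$, the paper splits into $m\equiv 0,1\pmod 4$ and proves the analogue Lemma~\ref{convr-even}. Its main term is $8g(d^2)X$ with $X=(x-2)/4$. If you keep all $n$ in $S_d$, you must either set $w(n)=0$ for $4\mid n$ or also treat even $d$.

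\textbf{Weights and constant.} For odd $n$ the parity condition is not automatic: it holds for exactly one of $(c,d)$ and $(d,c)$, so the weight is $1/2$. For $n\equiv 2\pmod 4$ the weight is $1$. The constant is therefore $\tfrac12\cdot 8C+2C=6C$, with $C=\sum_{d\ \mathrm{odd}}\mu(d)g(d^2)=\prod_{p>2}(1-g(p^2))$. You left this to be ``computed carefully'', but it is routine once the two lemmas are in place.
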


We also consider distances with few prime factors. For an integer $k \ge 1$ let $s_{\Gamma}^{(k)}(x)$ count the number of $(x_1,x_2,x_3,x_4)$ subject to the conditions of $s_{\Gamma}(x)$ such that $\omega(n)\le k$, where $\omega(n)$ is the number of distinct prime factors of $n$.

\begin{theorem}\label{almostprimes-theorem}
    For $x > 3$ we have
    \[
        s_{\Gamma}^{(7)}(x) \gg \frac{x}{\log x}.
    \]
\end{theorem}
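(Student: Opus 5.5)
We reduce the count to a sieve problem for the sequence
\[
a_n=\#\{\gamma\in\Gamma:\ x_1^2+x_2^2+x_3^2+x_4^2=n\},
\]
following Friedlander--Iwaniec \cite{MR2486486}. Writing $n=x_1^2+x_2^2+x_3^2+x_4^2$ with $x_1x_4-x_2x_3=1$, set $a=x_1+ix_3$, $b=x_2+ix_4$ or, more conveniently, $u=x_1+x_4$, $v=x_2-x_3$ and $u'=x_1-x_4$, $v'=x_2+x_3$. Then $u^2+v^2=n+2$ and $u'^2+v'^2=n-2$, so
\[
a_n=r(n-2)\,r(n+2)
\]
up to a fixed congruence restriction on parities. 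For $n$ odd the parity compatibility is automatic, and we will restrict to $n$ odd throughout. Thus
\[
s^{(7)}_\Gamma(x)\ge\sum_{\substack{n\le x,\ n \text{ odd}\\ \mu^2(n)=1,\ \omega(n)\le 7}} r(n-2)r(n+2).
\]
This is a lower bound sieve problem for the sequence $\mathcal A=(a_n)_{n\le x}$.

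The first step is to establish the level of distribution. For square-free odd $d$ we need
\[
A_d(x)=\sum_{\substack{n\le x\\ d\mid n}} a_n = g(d)X + R_d,
\qquad
\sum_{d\le D}|R_d|\ll x(\log x)^{-A}.
\]
Here $X\asymp x$ (essentially $6Cx$-type) and $g$ is multiplicative with $g(p)=\rho(p)/p$, where $\rho(p)$ is computable from the local densities of $r(m-2)r(m+2)$ at $m\equiv 0\pmod p$; for $p$ odd one gets $g(p)=\bigl(1+O(1/p)\bigr)/p$, so the sieve dimension is $\kappa=1$. To count $n\le x$, $n\equiv0\pmod d$, we write $r(n\pm2)$ as sums over Gaussian integers and reduce to counting lattice points $(x_1,\dots,x_4)$ with $x_1x_4-x_2x_3=1$ and $\sum x_i^2\equiv 0\pmod d$ in a ball. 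Equivalently, we count $\gamma\in\Gamma$ in a hyperbolic ball lying in a union of cosets of the congruence subgroup $\Gamma(d)$. The spectral theory of $\Gamma(d)\backslash\Hh$ (Selberg's $3/16$, or the Kim--Sarnak bound $\theta=7/64$), via the hyperbolic lattice point count with error $\ll d^{c}x^{1-\delta}$, gives an individual remainder uniform in $d$. Alternatively, and probably as the paper intends, we use the same Kloosterman/additive divisor machinery that gave the $x^{11/12}$ error in Theorem~\ref{sqfree-theorem}, namely the asymptotic for $\sum_{m\equiv a\,(q)} r(m-2)r(m+2)$. I expect this to yield level $D=x^{1/6-\varepsilon}$. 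This is the main obstacle: the uniformity in $d$ must be strong enough, and the constant $1/6$ is what ultimately produces $7$.

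With level $D=x^{\theta-\varepsilon}$, $\theta=1/6$, we then apply the weighted linear sieve (Richert's weights, as in Halberstam--Richert, Theorem 9.1 / the Greaves weighted sieve). The squarefreeness is handled separately. The contribution of $n$ divisible by $p^2$ with $p>z=D^{1/4}$ is $\ll\sum_{p>z} x/p^2\cdot(\log x)^{c}$ by the same level-of-distribution estimate, plus a trivial divisor bound for large $p$, which is $o(x/\log x)$. Small primes are sifted out, so their squares never divide $n$. Richert's weighted sieve for $\kappa=1$ gives
\[
\sum_{n} a_n\Bigl(1-\sum_{z\le p<y,\ p\mid n}\lambda\Bigl(1-\frac{\log p}{\log y}\Bigr)\Bigr)\gg \frac{X}{\log x},
\]
with $z=D^{1/4}$, $y=D$. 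Since $n\le x=D^{6+\varepsilon'}$, the positivity forces $\omega(n)\le r$ whenever $r>\Lambda$, where $\Lambda$ satisfies $\Lambda=\frac{\log x}{\log D}-1+\dots\approx 1/\theta$ plus the standard correction. The Richert bound $r>\frac{1}{\theta}-1+\frac{\log(4/(1+3\cdot\text{...}))}{\dots}$ is about $6.something$ for $1/\theta=6$, so $r=7$ suffices. Finally, $a_n\ge0$ and the weights are bounded, which gives $s^{(7)}_\Gamma(x)\gg x/\log x$. For general $\theta$ the same argument gives the conditional statement with $N=N(\theta)$.

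The numerical check that Richert's constant yields exactly $7$ at $\theta=1/6$ is routine. The real work is the uniform remainder estimate for $r(n-2)r(n+2)$ in progressions to level $x^{1/6}$, which we would try to extract from the proof of Theorem~\ref{sqfree-theorem}, where the error $x^{11/12}$ suggests exactly a loss of $d^{1/2}$ per modulus summed to $D\approx x^{1/6}$.
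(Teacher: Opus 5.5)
Your plan is the paper's argument: reduce to $a_n=r(n-2)r(n+2)$ on $n\equiv 3\pmod 4$, obtain level $D=x^{1/6-\varepsilon}$, run Richert's weighted linear sieve with $z=D^{1/4}$, and remove non-square-free $n$ using the trivial bound $a_n\ll x^{\varepsilon}$. Two points need fixing.

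\emph{The parity reduction.} For odd $n$ the parity condition is not automatic. Since $n\pm 2$ are odd, you have $y_1\not\equiv y_2$ and $y_3\not\equiv y_4 \pmod 2$. Swapping $y_3\leftrightarrow y_4$ then shows that exactly half of the $r(n-2)r(n+2)$ quadruples come from integer matrices. Your inequality should therefore read $s^{(7)}_\Gamma(x)\ge\frac12\sum\cdots$. This is harmless for a $\gg$ bound, but the inequality as you wrote it is false.

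\emph{The level of distribution.} The step you leave as an expectation is not something to be extracted from Theorem~\ref{sqfree-theorem}. It is exactly Lemma~\ref{convr}, the refined form of \cite[Lemma 4.2]{MR2486486} with individual error $O(d^{-1/2}x^{11/12+\varepsilon}+x^{1/2+\varepsilon})$. The saving $d^{-1/2}$ comes from the factor $q^{-1/2}$ in Lemma~\ref{rinAps} at modulus $q=4dk$, summed over $k<\sqrt x$. Summing over $d\le D$ gives $D^{1/2}x^{11/12+\varepsilon}+Dx^{1/2+\varepsilon}\ll x^{1-\varepsilon}$ for $D=x^{1/6-4\varepsilon}$, as in \eqref{eq:level}. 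This saving is what produces the exponent $1/6$: the uniform error $x^{11/12+\varepsilon}$ alone gives only level $x^{1/12}$. The spectral route through $\Gamma(d)$ that you mention is unnecessary. You should cite or derive this estimate rather than leave it as the open main obstacle.

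\emph{The sieve parameters.} This is the only genuine difference from the paper. You take $y=D$ and quote the Halberstam--Richert condition $r>\mu-1+\log 4/\log 3\approx 6.26$ with $\mu\approx 6$ (cf.\ the Remark in Section~\ref{sieve-section}). The paper instead takes $y=x^{3/20}<D$ and $\lambda=4/3$, so that $D/p\ge x^{1/100}$ uniformly for $z\le p<y$. It then verifies positivity of $\frac{e^\gamma}{2}\log 3-\frac34 J(\alpha)$ directly, which avoids the separate treatment that small levels $D/p$ near $p\approx D$ require. Either choice gives $\omega(n)\le 7$.
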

The number $7$ comes from the level of distribution $x^{1/6-\varepsilon}$ of the sequence $r(n-2)r(n+2)$ in arithmetic progressions which is available unconditionally in Lemma \ref{convr}.
We formulate the corresponding hypothesis with a general exponent.

\begin{conjecture}[Hypothesis $R(\theta)$] \label{conj-R}
    Let $0 < \theta \le 1$. 
    For every $\varepsilon>0$ and $A>0$ we have
    \[
        \sum_{\substack{d\le x^{\theta-\varepsilon}\\ d\ \mathrm{odd\ square\text{-}free}}}
        \Bigg|\sum_{\substack{n\le x\\ n\equiv3\ \pmod 4\\ n\equiv0\ \pmod d}}r(n-2)r(n+2)-8g(d)x\Bigg|\ll\frac{x}{(\log x)^A},
    \]
    where
    \[
        g(d) = \frac{1}{d} \prod_{p \mid d} \frac{p - \chi_4(p)}{p+\chi_4(p)},
    \]
    see also Lemma \ref{convr}, and an implied constant depending only on $\varepsilon$ and $A$.
\end{conjecture}
By Lemma \ref{convr}, $R(\theta)$ holds for $\theta = 1/6$. 
In analogy with the Elliott--Halberstam conjecture one may expect $R(\theta)$ to hold for every $\theta<1$. 
Note that $R(\theta)$ concerns the weights $r(n-2)r(n+2)$ only and does not involve primes.
Our conditional result is the following.
\begin{theorem}\label{almostprimes-theorem-conditional}
    Let $0 < \theta \le 1$ and assume $R(\theta)$.
    Then, with $N = \lfloor2/\theta\rfloor$, for $x > 3$ we have
    \[
        s_{\Gamma}^{(N)}(x)\gg\frac{x}{\log x}.
    \]
    In particular, $R(1/2)$ implies $s_{\Gamma}^{(4)}(x) \gg x/\log x$.
\end{theorem}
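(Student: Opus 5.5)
We prove Theorem \ref{almostprimes-theorem-conditional} by the same route as Theorem \ref{almostprimes-theorem}: reduce $s_\Gamma^{(N)}(x)$ to a sieve problem for the weighted sequence $a_n=r(n-2)r(n+2)$, $n\equiv 3\pmod 4$, and run the weighted linear sieve with level $D=x^{\theta-\varepsilon}$ supplied by $R(\theta)$ instead of the unconditional level $x^{1/6-\varepsilon}$ from Lemma \ref{convr}.

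\emph{Reduction.} We count quadruples with $x_1^2+x_2^2+x_3^2+x_4^2=n$ and $x_1x_4-x_2x_3=1$. Then $(x_1+x_4)^2+(x_2-x_3)^2=n+2$ and $(x_1-x_4)^2+(x_2+x_3)^2=n-2$. Conversely, a pair of representations $n+2=a^2+b^2$, $n-2=c^2+d^2$ with the right parities recovers a unique quadruple. For odd $n$ the parity condition on $(a,b,c,d)$ is automatic up to a fixed proportion. So the number of orbit points at distance $n-2$ is a constant multiple of $r(n-2)r(n+2)$ for $n\equiv 3\pmod 4$; this should match the normalization $8g(d)x$ in $R(\theta)$ and the constant $6C$ in Theorem \ref{sqfree-theorem}. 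Since $n\equiv 1\pmod 4$ forces $n+2\equiv 3\pmod 4$ and hence $r(n+2)=0$, only $n\equiv3\pmod4$ contributes. Thus
\[
s_\Gamma^{(N)}(x)\asymp \sum_{\substack{n\le x,\ n\equiv 3 \pmod 4\\ \mu^2(n)=1,\ \omega(n)\le N}} r(n-2)r(n+2).
\]

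\emph{Sieve setup.} Put $\mathcal A=(a_n)$ with $a_n=r(n-2)r(n+2)\mathbf 1_{n\equiv3 \pmod 4}$. By $R(\theta)$, for odd square-free $d$ we have $|\mathcal A_d|=8g(d)x+r_d$ with $\sum_{d\le x^{\theta-\varepsilon}}|r_d|\ll x(\log x)^{-A}$. The density $g(p)=\frac1p\cdot\frac{p-\chi_4(p)}{p+\chi_4(p)}=\frac1p+O(p^{-2})$ gives sieve dimension $\kappa=1$. We then apply the weighted linear sieve (Richert/Greaves weights, or the simpler Kuhn--Richert form), with sifting range $z=x^{1/\Lambda}$ and weights $1-\sum_{z\le p<y}\lambda(1-\log p/\log y)$ over $p\mid n$. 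The standard result states that with level $D=x^{\theta-\varepsilon}$ the weighted sum is $\gg x/\log x$, and that every $n$ with positive weight has $\Omega(n)\le N$ as soon as $N>\frac{1}{\theta}\cdot(\text{Greaves constant})-1$. Using the classical numerology $N>\Lambda_\theta$ with $\Lambda_\theta\approx 2/\theta-\delta$ for Greaves weights, this gives $N=\lfloor 2/\theta\rfloor$. For $\theta=1/6$ that yields $12$ by the crude count, so I would follow whatever sharper numerology produced $7$ in the unconditional case, presumably Greaves' $\Lambda=1/\theta+\ldots$ bound, and simply rescale it. The key inequality to verify is that the weighted sieve condition holds at $N=\lfloor 2/\theta\rfloor$ for every $\theta\in(0,1]$.

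\emph{Square-free and cleanup.} The sieve counts $\Omega$, but we need $\mu^2(n)=1$ and $\omega(n)\le N$. We discard the $n$ divisible by $p^2$ with $p\ge z$ (and $p=2$ is excluded by the congruence). Their weighted contribution is $\ll \sum_{p\ge z} x\,g(p^2)\cdot(\log x)^{O(1)}$, or more simply bounded via $r(m)\ll m^\varepsilon$ together with divisor bounds, giving $\ll x^{1-1/\Lambda+\varepsilon}=o(x/\log x)$. Square-free $n$ with $\Omega(n)\le N$ then satisfy $\omega(n)\le N$, which gives the lower bound. For the \textbf{main obstacle} I expect the numerology: checking that the weighted sieve constants give exactly $\lfloor 2/\theta\rfloor$ uniformly in $\theta$, including the endpoint cases where $2/\theta$ is an integer. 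I would handle it by taking $\varepsilon$ small and using the strict inequality margin in the Richert weight condition $N> \frac{1}{\theta}\bigl(1+\ldots\bigr)$, reusing the computation from the proof of Theorem \ref{almostprimes-theorem}.
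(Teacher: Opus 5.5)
Your proposal has a genuine gap: the decisive step is never carried out. You appeal to ``the standard result'' for a weighted sieve with an unspecified ``Greaves constant''. You then attribute the threshold $2/\theta-\delta$ to Greaves-type weights, and propose to ``rescale'' the computation that gave $7$ at $\theta=1/6$. You yourself leave the verification that the sieve yields $\Omega(n)\le\lfloor 2/\theta\rfloor$ for every $\theta\in(0,1]$ as the unresolved main obstacle.

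The numerology you describe is also muddled:
\begin{itemize}
\item $2/\theta$ is not the weighted-sieve threshold. Richert's weights give roughly $N>1/\theta-1+\log 4/\log 3$; compare the Remark after Theorem~\ref{almostprimes-theorem-conditional}.
\item The value $7$ at $\theta=1/6$ is the content of Theorem~\ref{almostprimes-theorem}. The present statement only claims $N=12$ there.
\item The paper's weighted argument is a numerical positivity check at specific parameters ($z=D^{1/4}$, $y=x^{3/20}$, $\lambda=4/3$), not a formula in $\theta$ that can be rescaled.
\item Off-the-shelf weighted-sieve theorems are typically stated with remainder sums weighted by $3^{\nu(d)}$, which $R(\theta)$ does not directly supply.
\end{itemize}

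The missing observation is that $\lfloor 2/\theta\rfloor$ is exactly what the plain lower-bound linear sieve gives, with no weights at all. This is the ``crude count'' you set aside. Take $D=x^{\theta-4\varepsilon}$, $s=2+\eta$ and $z=D^{1/s}$.
\begin{itemize}
\item Since $f(s)=2e^{\gamma}\log(s-1)/s>0$ for $s>2$, Lemma~\ref{linear-sieve-lem} together with $R(\theta)$ (with $A=2$) and $V(z)\asymp 1/\log z$ gives $S(\mathcal A,\mathcal P,z)\gg x/\log x$.
\item Every $n$ counted has all prime factors at least $z$, so $\Omega(n)\le \log x/\log z=(2+\eta)/(\theta-4\varepsilon)$.
\item Because $2/\theta<\lfloor 2/\theta\rfloor+1$ strictly, you can choose $\eta$ and $\varepsilon$ so that this bound is $<N+1$. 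This also handles the case $2/\theta\in\Z$, which is no obstacle at all.
\end{itemize}

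Your reduction is fine: only $n\equiv 3\pmod 4$ is needed for the lower bound, and each such $n$ gives $\tfrac12 r(n-2)r(n+2)$ orbit points. It yields $s_\Gamma^{(N)}(x)\ge\tfrac12\sum(\cdots)$ rather than the two-sided $\asymp$ you wrote, but only the lower bound is needed. Your square-free cleanup via $r(m)\ll m^{\varepsilon}$ is also fine, and both steps match the paper.
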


Theorem \ref{almostprimes-theorem-conditional} is obtained from the linear sieve, see Lemma \ref{linear-sieve-lem}, applied to the sequence $r(n-2)r(n+2)$, whose level of distribution, by Lemma~\ref{convr} below, is $x^{1/6-\varepsilon}$. Taking $\theta = 1/6$ in the above theorem gives $\omega(n) \le 12$ unconditionally.
Theorem \ref{almostprimes-theorem} is obtained by adding to this argument the logarithmic weights of Richert \cite{MR318083}, which brings the number $12$ down to $7$.

In a recent paper \cite{shp-xiao}, Shparlinski and Xiao obtain, by rather different methods, an asymptotic formula for the number $S_{\mathrm{sq}}(T)$ of
matrices
\[
    \gamma = \begin{pmatrix}x_1&x_2\\ x_3&x_4\end{pmatrix} \in \SL_2(\Z),
    \qquad \max_{1 \le i \le 4}|x_i| \le T,
\]
for which $n = x_1^2+x_2^2+x_3^2+x_4^2$ is square-free. Their treatment is
considerably more general than ours and proceeds by a $\delta$-method combined
with the weighted linear sieve.

The two problems differ in the way the matrices are ordered: in \cite{shp-xiao}
they are ordered by the sup-norm of the entries, whereas we order them by the value of the quadratic form, that is via $n \le x$. This ordering connects the problem to the sequence $r(n-2)r(n+2)$, so that we may follow the approach of \cite{MR2486486}, allowing us to get a better level of distribution and, therefore, fewer prime factors via Lemma \ref{convr}.

\section{Auxiliary statements}

\subsection{Representations as a sum of two squares}

Let us first recall some known results about $r(n)$. 
In what follows $\chi_4$ is the non-principal Dirichlet character modulo $4$ that is completely multiplicative function given by
\[
	\chi_4(p) = \begin{cases}
					1, &p \equiv 1 \pmod 4,\\
					-1, &p \equiv 3 \pmod 4,\\
					0, &p=2.
				\end{cases}
\]

We use the classical formula
\[
    r(m) = 4\sum_{d \mid m}\chi_4(d), \qquad m \ge 1,
\]
which in particular gives the trivial bound $r(m) \le 4 \tau(m) \ll  m^{\varepsilon}$, where $\tau$ is the divisor function.

The following two results are Lemma 4.1 and Lemma 4.2 of \cite{MR2486486}, respectively, fore references see also results of Smith \cite{MR232741}, \cite{MR249377}.

\begin{lemma}\label{rinAps}
	Let $(a,q)=1$. 
	Then for $q \not\equiv 0 \pmod 4$ we have
	\[
		\sum_{\substack{n \leq x \\ n \equiv a \pmod q}} r(n) = \frac{\pi x}{q}\prod_{p \mid q} \left(1-\frac{\chi_4(p)}{p}\right) + O (q^{-1/2}x^{2/3+\varepsilon})
	\]		
and the main term is multiplied by $1+\chi_4(a)$ for $q \equiv 0 \pmod 4$.
\end{lemma}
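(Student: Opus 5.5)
The plan is to use the divisor-sum formula $r(n)=4\sum_{d\mid n}\chi_4(d)$ and reduce the problem to counting lattice points in a circle. First observe that $\sum_{n\le x,\,n\equiv a\ (q)} r(n)$ counts pairs $(u,v)\in\Z^2$ with $0<u^2+v^2\le x$ and $u^2+v^2\equiv a \pmod q$. Split the pairs by their residues $(u_0,v_0)\pmod q$ satisfying $u_0^2+v_0^2\equiv a\pmod q$. For each residue pair, count the lattice points of the shifted lattice $(u_0,v_0)+q\Z^2$ inside the disc of radius $\sqrt x$. The main term is $\pi x/q^2$ times the number $\rho(q,a)$ of solutions of $u_0^2+v_0^2\equiv a\pmod q$. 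The local count is multiplicative by CRT. For an odd prime $p\nmid a$ and $k\ge1$, Gauss sums give $\rho(p^k,a)=p^{k}(1-\chi_4(p)/p)$. For $2\nmid a$, $\rho(2^k,a)$ equals $2^k$ for $k=1$, and $2^k(1+\chi_4(a))$ for $k\ge2$. This yields exactly $\frac{\pi x}{q}\prod_{p\mid q}(1-\chi_4(p)/p)$, with the factor $1+\chi_4(a)$ appearing when $4\mid q$ (the prime 2 contributes no Euler factor since $\chi_4(2)=0$).

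For the error term, the naive bound per residue class is $O(\sqrt x/q+1)$, and summing over about $q$ classes is too crude. Instead I would expand the condition on residues additively. Write
\[
\mathbf 1_{u^2+v^2\equiv a\ (q)}=\frac1q\sum_{h\bmod q}e\Bigl(\frac{h(u^2+v^2-a)}{q}\Bigr).
\]
The term $h=0$ gives the main term via the Gauss circle problem, with error $O(x^{1/3}/q)$. For $h\ne0$, apply Poisson summation in $(u,v)$ to the smoothed disc indicator. This produces quadratic Gauss sums of size $\ll\sqrt q$ per variable, and the dual sums decay like the Bessel-function transform of the disc. Summing over frequencies up to a truncation parameter and optimizing (smoothing the disc at scale $\delta$ costs $\delta\sqrt x$) gives a bound of the shape $q^{-1/2}x^{2/3+\varepsilon}$ uniformly, with Kloosterman/Salié-type sums controlled by the Weil bound.

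The main obstacle is this last step: obtaining the saving $q^{-1/2}$ uniformly in $q$ rather than a bound like $q^{1/2}x^{1/3}$. I would first try the smoothing-plus-Poisson route with the Weil bound for the resulting Salié sums. If that proves unwieldy, I would fall back on Smith's results cited above, or equivalently on Friedlander--Iwaniec Lemma 4.1, which the paper already allows us to quote directly.
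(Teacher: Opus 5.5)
The paper gives no proof of Lemma~\ref{rinAps}. It quotes it from Lemma 4.1 of Friedlander--Iwaniec (going back to Smith), so your fallback is exactly the paper's route. Your local-density computation is also correct: $\rho(p^k,a)=p^k(1-\chi_4(p)/p)$ for odd $p\nmid a$, $\rho(2,a)=2$, and $\rho(2^k,a)=2^k(1+\chi_4(a))$ for $k\ge 2$. This gives the main term $\pi x\rho(q,a)/q^2$.

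\textbf{The mistake in the self-contained sketch.} Your claim that the term $h=0$ gives the main term is wrong. That term is $\frac1q\#\{\mathbf u\in\Z^2:|\mathbf u|^2\le x\}=\pi x/q+O(x^{1/3}/q)$. It carries neither the factor $\prod_{p\mid q}(1-\chi_4(p)/p)$ nor the factor $1+\chi_4(a)$.

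Those factors come from the zero Poisson frequency of the terms with $h\neq 0$. For $q=p$ an odd prime, $G(h,p)^2=\chi_4(p)p$ whenever $p\nmid h$, so these zero frequencies add up to $\frac1p\sum_{h\ne0}\frac{\pi x}{p^2}\chi_4(p)p\,e(-ha/p)=-\chi_4(p)\pi x/p^2$. This is of exact order $x/q^2$, which exceeds $q^{-1/2}x^{2/3}$ when $q<x^{2/9}$. So the $h\neq0$ terms cannot all be sent to the error term.

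The clean fix is not to expand in $h$ at all. Apply Poisson to each class $\mathbf u_0+q\Z^2$ with $|\mathbf u_0|^2\equiv a\pmod q$. This gives $q^{-2}\sum_{\mathbf m\in\Z^2}\hat W(\mathbf m/q)K(\mathbf m;q)$ with $K(\mathbf m;q)=\sum_{|\mathbf u_0|^2\equiv a\pmod q}e(\mathbf m\cdot\mathbf u_0/q)$, and the frequency $\mathbf m=0$ is exactly your density main term.

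\textbf{The obstacle you flag is not real.} Take $W(\mathbf u)=w(|\mathbf u|^2/x)$ with $w$ dropping from $1$ to $0$ on $[1,1+\delta]$. Then $|\hat W(\xi)|\ll x(1+\sqrt x|\xi|)^{-3/2}(1+\delta\sqrt x|\xi|)^{-A}$. Also $|K(\mathbf m;q)|\ll q^{1/2+\varepsilon}(m_1,m_2,q)^{1/2}$: after $s=u_0+iv_0$ these become Kloosterman-type sums, over $\mathbb F_p^\times$ or over the norm-one torus of $\mathbb F_{p^2}$, and the Weil bound applies.

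With these inputs:
\begin{itemize}
\item the frequencies $\mathbf m\neq 0$ contribute $\ll q^{1/2+\varepsilon}\delta^{-1/2}$;
\item the smoothing costs $\ll (x\delta/q+1)x^{\varepsilon}$;
\item choosing $\delta=qx^{-2/3}$ gives error $O(x^{1/3+\varepsilon})$ for $q\le x^{2/3}$, which is already $\le q^{-1/2}x^{2/3+\varepsilon}$ in that range;
\item for $x^{2/3}<q\le x^{4/3}$, the trivial bound $(x/q+1)x^{\varepsilon}$ is $\le q^{-1/2}x^{2/3+\varepsilon}$.
\end{itemize}
So no special ``saving $q^{-1/2}$'' is needed; the stated error term is weaker than what the method gives, once you split into these two ranges.

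Some restriction like $q\le x^{4/3}$ is implicit in the lemma, and it holds in all of the paper's applications, where $q\ll x$. For example, with $a=1$ and $x^2\le q\le x^3$ the left side equals $r(1)=4$, while the right side is $o(1)$.
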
 

\begin{lemma} \label{convr}
    Let $d$ be an odd positive integer and let $\varepsilon > 0$. Then
    \[
        \sum_{\substack{n\le x\\ n\equiv3\ (\mathrm{mod}\ 4)\\ n\equiv0\ (\mathrm{mod}\ d)}}r(n-2)r(n+2) = 8g(d)x+O\big(d^{-1/2}x^{11/12+\varepsilon}+x^{1/2+\varepsilon}\big),
    \]
    where
    \[
        g(d) = \frac{1}{d} \prod_{p\mid d}\frac{p-\chi_4(p)}{p+\chi_4(p)}
    \]
    and the implied constant depends only on $\varepsilon$. 
    In particular the error term is $O(x^{11/12+\varepsilon})$ uniformly in $d$.
\end{lemma}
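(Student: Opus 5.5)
The plan is to open up one factor $r(n-2)$ via the divisor formula and reduce everything to Lemma \ref{rinAps} applied to the other factor $r(n+2)$ in progressions. Since $n\equiv 3\pmod 4$, both $m_1=n-2$ and $m_2=n+2$ are $\equiv 1\pmod 4$.

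\emph{Dirichlet hyperbola step.} Write $r(m_1)=4\sum_{a\mid m_1}\chi_4(a)$. For a divisor $a>\sqrt{m_1}$ we replace it by $m_1/a$. Because $m_1\equiv 1\pmod 4$, we have $\chi_4(m_1/a)=\chi_4(a)$, hence
\[
r(n-2)=8\sum_{\substack{a\mid n-2\\ a<\sqrt{n-2}}}\chi_4(a)+O(\mathbf{1}_{n-2=\square}).
\]
The square term contributes $O(x^{1/2+\varepsilon})$ after using $r(n+2)\ll x^{\varepsilon}$. Interchanging summations gives
\[
S=8\sum_{\substack{a<\sqrt{x}\\ a\ \mathrm{odd}}}\chi_4(a)\sum_{\substack{a^2+2<n\le x\\ n\equiv 2\ (a),\ n\equiv 0\ (d),\ n\equiv 3\ (4)}}r(n+2)+O(x^{1/2+\varepsilon}).
\]
Only odd $a$ survive. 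Moreover $d\mid n$ and $a\mid n-2$ force $(a,d)\mid 2$, hence $(a,d)=1$, so the inner conditions define a single class $m=n+2\equiv b\pmod{4ad}$. This class satisfies $(b,4ad)=1$, since $m\equiv 4\pmod a$, $m\equiv 2\pmod d$ and $m\equiv 1\pmod 4$, and it has $\chi_4(b)=1$.

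\emph{Applying Lemma \ref{rinAps}.} With $q=4ad$ the main term picks up the factor $1+\chi_4(b)=2$. The inner sum therefore equals
\[
\frac{2\pi(x-a^2)}{4ad}\prod_{p\mid ad}\Bigl(1-\frac{\chi_4(p)}{p}\Bigr)+O\bigl((ad)^{-1/2}x^{2/3+\varepsilon}\bigr).
\]
Summing the error over $a\le\sqrt x$ gives $d^{-1/2}x^{2/3+\varepsilon}x^{1/4}=d^{-1/2}x^{11/12+\varepsilon}$, which is exactly the first error term in the statement.

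\emph{Main term.} Set $h(a)=\prod_{p\mid a}(1-\chi_4(p)/p)$, restricted to $(a,2d)=1$. The main term is
\[
\frac{4\pi}{d}\prod_{p\mid d}\Bigl(1-\frac{\chi_4(p)}{p}\Bigr)\sum_{\substack{a<\sqrt x\\(a,2d)=1}}\frac{\chi_4(a)h(a)}{a}(x-a^2).
\]
The function $h$ is $1*\beta$ with $\beta$ supported on square-free $a$ and $\beta(p)=-\chi_4(p)/p$, so $\sum\beta(b)/b^{\sigma}$ converges absolutely for $\sigma>0$. Combined with $\sum_{a\le y}\chi_4(a)\mathbf 1_{(a,d)=1}\ll \tau(d)$, partial summation handles two pieces:
\begin{itemize}
\item the tail $x\sum_{a>\sqrt x}$ is $O(x^{1/2+\varepsilon})$;
\item the piece $\sum_{a<\sqrt x}\chi_4(a)h(a)a\ll x^{1/2+\varepsilon}$.
\end{itemize}
It remains to evaluate the complete series $\sum_{(a,d)=1}\chi_4(a)h(a)/a$ as an Euler product. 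Each prime $p\nmid 2d$ contributes
\[
1+\frac{\chi_4(p)}{p}\Bigl(1-\frac{\chi_4(p)}{p}\Bigr)\Bigl(1-\frac{\chi_4(p)}{p}\Bigr)^{-1}\cdots,
\]
which simplifies to $(1-\chi_4(p)/p)^{-1}\cdot(\text{correction})$. Factoring out $L(1,\chi_4)=\pi/4$ and removing the Euler factors at $p\mid d$, the constant should collapse to $\frac{2}{d}\prod_{p\mid d}\frac{p-\chi_4(p)}{p+\chi_4(p)}$, giving $8g(d)x$.

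I expect this Euler-product bookkeeping to be the main obstacle, namely checking that all factors for $p\nmid d$ cancel exactly to yield $8g(d)$. The error analysis is routine once Lemma \ref{rinAps} is available. If the product does not telescope cleanly, the fallback is to compute the local densities directly: for $p\nmid 2d$, compute the expected value of $r_p(n-2)r_p(n+2)$ over $n$, and for $p\mid d$ its value conditioned on $p\mid n$. Then compare the two local factors, whose ratio at $p\mid d$ should be $\frac{p-\chi_4(p)}{p(p+\chi_4(p))}$.
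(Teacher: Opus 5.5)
Your proposal follows the paper's proof essentially step for step: the hyperbola split of $r(n-2)$, Lemma~\ref{rinAps} applied modulo $4ad$ with the factor $1+\chi_4(b)=2$, summing the error over $a<\sqrt x$, and partial summation through the convolution $\chi_4(a)h(a)=\sum_{b\mid a}\chi_4(a/b)\mu(b)\chi_4^2(b)/b$ to control the tail and the $a^2$-terms. The Euler product you were unsure about does close: the local factor at $p\nmid 2d$ is exactly $1+\chi_4(p)/p=(1-\chi_4(p)/p)^{-1}(1-p^{-2})$, and $\prod_p(1+\chi_4(p)/p)=\frac{L(1,\chi_4)}{\zeta(2)}\cdot\frac{4}{3}=\frac{2}{\pi}$, so the main term is $\frac{4\pi h(d)}{d}\cdot\frac{2}{\pi}\prod_{p\mid d}(1+\chi_4(p)/p)^{-1}\,x=8g(d)x$, as required.
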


Lemma 4.2 of \cite{MR2486486} is stated with the error term $O(x^{11/12+\varepsilon})$, however we need the refined form of Lemma~\ref{convr}, which the proof in \cite{MR2486486} in fact
gives. 
The lemma is stated in \cite{MR2486486} for all odd $d$, not only for square-free $d$, and we shall apply it with $d$ a perfect square. 
Below we recall the argument, indicating the changes in the error term in comparison with \cite{MR2486486}. 

\begin{proof}
    Recall that for $m \equiv 1 \pmod 4$ we have
    \[
    	r(m) = 8 \sum_{\substack{k \mid m \\ k < \sqrt{m}}} \chi_4(k)+4\chi_4(\sqrt{m}),
    \]
    where the last term vanishes if $m$ is not a square.
    Since $n\equiv 3 \pmod 4$ we have $n-2 \equiv 1  \pmod 4$, so
    \[
        r(n-2) = 8\sum_{ \substack{k\mid n-2 \\ k<\sqrt{n-2}}}\chi_4(k)+4\chi_4(\sqrt{n-2}),
    \]
    the last term occurring only when $n-2$ is a square.
    When summed over $n \le x$ the last term contributes
    \[
        \sum_{\substack{n\le x\\ n \equiv 3 \pmod 4 \\ n \equiv 0 \pmod d}}r(n+2) \chi_4 (\sqrt{n-2}) \ll \sum_{m \le x} m^{\varepsilon} \chi_4(\sqrt{m}) \ll x^{1/2+\varepsilon},
    \]
    which is negligible for our goals.

    Writing $k\mid n-2$ as $n\equiv2\pmod k$ we therefore obtain
    \[
        \sum_{\substack{n\le x\\ n \equiv 3 \pmod 4 \\ n \equiv 0 \pmod d}}r(n-2)r(n+2)
        = 8 \sum_{\substack{k < \sqrt {x-2}\\ (k,d) = 1}} \chi_4(k)\sum_{\substack{k^2+2 < n \le x\\ n \equiv 3 \pmod 4 \\ n \equiv 0 \pmod d \\ n \equiv 2 \pmod k}} r(n+2) + O(x^{1/2+\varepsilon}),
    \]
    where only odd $k$ occur, since $\chi_4(k) = 0$ otherwise.
    
    Note that as $n \equiv 0 \pmod d$ and $n \equiv 2 \pmod k$ we have $(k,d)=1$.
    Indeed, assume $(k,d) = \delta > 1$ and write $d = d'\delta$, $k = k'\delta$.
    From $n \equiv 0 \pmod d$ and $n \equiv 2 \pmod k$ we get $n = ud'\delta$ and $n = vk'\delta+2$ for
    some integers $u,v$, thus $\delta(ud'-vk') = 2$ and so $\delta \mid 2$. But $\delta \mid d$ and
    $d$ is odd, therefore $\delta = 1$.
    
    For $(k,d)=1$ and $k$ odd, $n+2$ runs over a residue class modulo $4dk$ which is coprime to $4dk$ and congruent to $1$ modulo $4$, therefore we can apply Lemma \ref{rinAps} and get    
    \[
        \begin{split}
            \sum_{\substack{k^2+2 < n \le x\\ n \equiv 3 \pmod 4 \\ n \equiv 0 \pmod d \\ n \equiv 2 \pmod k}}& r(n+2) = \sum_{\substack{k^2+4 < m \le x+2 \\ m \equiv a \pmod {4dk}}} r(m) \\
            &= \frac{\pi (x-k^2)}{2 dk} h(d) h(k) + O\bigl((dk)^{-1/2} (x^{2/3+\varepsilon}+k^{4/3 + \varepsilon}) + (dk)^{-1+\varepsilon'}\bigr),
        \end{split}
    \]
    where we used
    \[
        \prod_{p \mid 4dk} \biggl( 1-\frac{\chi_4(p)}{p}\biggr) =  h(d)h(k)
    \]
    where $h(m)=\prod_{p\mid m}(1-\chi_4(p)/p)$, and we used that $\chi_4(2)=0$ together with
    $(k,d)=1$. Here $a$ denotes the class given by the Chinese remainder theorem, and the main
    term is multiplied by $1 + \chi_4(a) = 2$, since $4dk \equiv 0 \pmod 4$ and $a \equiv 1 \pmod 4$.
    The term $O\bigl((dk)^{-1+\varepsilon'}\bigr)$ comes from the length of the interval being $x-k^2-2$.

    As $k \ll \sqrt{x}$ the error term contribution is
    \[
        d^{-1/2}\sum_{k \ll \sqrt{x}} \bigl( x^{2/3+\varepsilon} k^{-1/2} + k^{5/6 +\varepsilon} + d^{-1/2+\varepsilon'}k^{-1+\varepsilon'} \bigr) \ll d^{-1/2} x^{11/12+\varepsilon}.
    \]
    Thus we have achieved so far
    \[
        \begin{split}
            \sum_{\substack{ n \le x\\ n \equiv 3 \pmod 4\\ n \equiv 0 \pmod d}} r(n-2)r(n+2) &= 4 \pi \frac{h(d)}{d} \sum_{\substack{k < \sqrt{x}\\ (k,d)=1}} \frac{\chi_4(k)}{k} h(k) (x-k^2) \\
            &+ O(d^{-1/2} x^{11/12+\varepsilon} + x^{1/2+\varepsilon}).
        \end{split}
    \]
    
    Completing the sum over $k$ to infinity and using that $h(p) = 1-\chi_4(p)/p$ we get
    \[
        \begin{split}
            \sum_{(k,d) = 1}\frac{\chi_4(k)h(k)}{k} &= \prod_{p \nmid d} \biggl( 1 + \biggl(1-\frac{\chi_4(p)}{p}\biggr) \sum_{j \ge 1} \frac{\chi_4(p)^j}{p^j} \biggr) \\
            &= \prod_{p \nmid d}\biggl( 1 + \frac{\chi_4(p)}{p}\biggr) = \frac{2}{\pi} \prod_{p\mid d}\biggl(1+\frac{\chi_4(p)}{p}\biggr)^{-1},
        \end{split}
    \]
    where we used that
    \[
        \begin{split}
             \prod_{p}\biggl( 1 + \frac{\chi_4(p)}{p}\biggr) &= \prod_{p}\biggl( 1 - \frac{\chi_4(p)}{p}\biggr)^{-1} \biggl( 1 - \frac{\chi_4^2(p)}{p^2}\biggr) \\ 
             & = \prod_{p}\biggl( 1 - \frac{\chi_4(p)}{p}\biggr)^{-1} \prod_{p > 2}\biggl( 1 - \frac{1}{p^2}\biggr) \\
             &= \frac{L(1, \chi_4)}{\zeta(2)} \biggl(1-\frac{1}{4}\biggr)^{-1} = \frac{2}{\pi}.
        \end{split}
    \]
    Therefore, the contribution of this completed to infinity sum is $x$ times
    \[
        \begin{split}
            \frac{8h(d)}{d} \prod_{p \mid d} \biggl(1+\frac{\chi_4(p)}{p}\biggr)^{-1} &= \frac{8}{d} \prod_{p \mid d} \biggl(1-\frac{\chi_4(p)}{p}\biggr) \biggl(1+\frac{\chi_4(p)}{p}\biggr)^{-1}\\
            &= \frac{8}{d} \prod_{p \mid d} \frac{p-\chi_4(p)}{p+\chi_4(p)} = 8g(d),
        \end{split}
    \]
    which is our expected main term.
    We note that $0 \le g(p) < 1$ for every odd prime $p > 2$, and that
    \[
        g(p) = \frac{1}{p}-\frac{2\chi_4(p)}{p(p+\chi_4(p))} = \frac{1}{p} + O\biggl(\frac{1}{p^2}\biggr), \quad
        g(p^2) = \frac{p-\chi_4(p)}{p^2(p+\chi_4(p))} \le \frac{2}{p^2}.
    \]
    It remains to show that the tail and terms coming from $k^2$ are small and hence go to the error term.
    Notice that elementarily we have
    \[
        \sum_{\substack{k \le t \\ (k,d)=1}} \chi_4(k) = \sum_{d' \mid d} \mu(d') \chi_4(d')  \sum_{m \le t/d'} \chi_4(m) \ll \tau(d).
    \]

    For $p \nmid 2d$ and $\text{Re} s > 1$ we have
    \[
        \sum_{j \ge 0} \frac{\chi_4(p^j)h(p^j)}{p^{js}}
        = 1 + \Bigl(1-\frac{\chi_4(p)}{p}\Bigr)\frac{\chi_4(p)p^{-s}}{1-\chi_4(p)p^{-s}}
        = \frac{1-p^{-s-1}}{1-\chi_4(p)p^{-s}},
    \]
    so that, comparing coefficients in
    \[
        \sum_{(k,d) = 1}\frac{\chi_4(k)h(k)}{k^{s}}
     = \prod_{p \nmid d}\biggl(1-\frac{\chi_4(p)}{p^{s}} \biggr)^{-1} \prod_{p \nmid 2d}\biggl(1-\frac{1}{p^{s+1}}\biggr),
    \]
    for every $k$ with $(k,d)=1$ one has
    \[
        \chi_4(k) h(k) = \sum_{k' \mid k} \chi_4(k/k') \frac{\mu(k')}{k'},
    \]
    where $k'$ runs over odd square-free numbers.
    Therefore
    \[
        \sum_{\substack{k \le t \\ (k,d)=1}} \chi_4(k) h(k) = \sum_{\substack{k' \le t \\ (k', 2d)=1}} \frac{\mu(k')}{k'} \sum_{\substack{u \le t/k'\\ (u,d)=1}} \chi_4(u)
    \]
    and consequently
    \[
        \sum_{\substack{k \le t \\ (k,d)=1}} \chi_4(k) h(k) \ll \tau(d) \sum_{k' \le t} \frac{|\mu(k')|}{k'} \ll \tau(d) \log t.
    \]
    Thus, by partial summation we have
    \[
        \begin{split}
            \sum_{\substack{k > \sqrt{x} \\ (k,d)=1}} \frac{\chi_4(k)}{k} h(k) &\ll \frac{\tau(d) \log x}{\sqrt{x}} + \tau(d) \int_{\sqrt{x}}^{\infty} \frac{\log t}{t^2} dt \ll \frac{\tau(d) \log x}{\sqrt{x}}
        \end{split} 
    \]
    and the tail contribution can be bounded as
    \[
        4 \pi x \frac{h(d)}{d} \sum_{\substack{k > \sqrt{x} \\ (k,d)=1}} \frac{\chi_4(k)}{k} h(k) \ll x^{1/2 + \varepsilon}.
    \]
    It remains to bound the contribution of terms involving $k^2$, which up to a constant factor time $h(d)/d$ is
    \[
        \begin{split}
            \sum_{\substack{k \le \sqrt{x} \\ (k,d)=1}} \chi_4(k) h(k) k &= \sqrt{x} \sum_{\substack{k \le \sqrt{x} \\ (k,d)=1}} \chi_4(k) h(k)
            -   \int_1^{\sqrt{x}} \biggl( \sum_{\substack{k \le t \\ (k,d)=1}} \chi_4(k) h(k) \biggr) dt\\
            &\ll \tau(d) \sqrt{x} \log x,
        \end{split} 
    \]
    therefore
    \[
         \frac{h(d)}{d} \sum_{\substack{k \le \sqrt{x} \\ (k,d)=1}} \chi_4(k) h(k) k \ll \frac{h(d)}{d} \tau(d) \sqrt{x} \log x \ll  x^{1/2 + \varepsilon}.
    \]
    Finally, we may conclude
    \[
            \sum_{\substack{ n \le x\\ n \equiv 3 \pmod 4\\ n \equiv 0 \pmod d}} r(n-2)r(n+2) = 8 g(d) x
            + O(d^{-1/2} x^{11/12+\varepsilon} + x^{1/2+\varepsilon})
    \]
    as required.
\end{proof}

In order to estimate the contribution of even $d$ in Theorem \ref{sqfree-theorem} we need the following lemma.

\begin{lemma}\label{convr-even}
    Let $q$ be an odd positive integer, let $a$ be an integer with $(a(a+1),q) = 1$ and let
    $\varepsilon > 0$. Then
    \[
        \sum_{\substack{n \le x\\ n \equiv a \pmod q}} r(n) r(n+1)
        = 8g(q)x + O\bigl(q^{-1/2}x^{11/12+\varepsilon}+x^{1/2+\varepsilon}\bigr),
    \]
    with $g$ as in Lemma \ref{convr} and the implied constant depending only on $\varepsilon$.
\end{lemma}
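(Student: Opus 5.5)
The plan is to follow the proof of Lemma \ref{convr} step by step. The only new feature is the $2$-adic bookkeeping, which is needed because here the two factors are $r(n)$ and $r(n+1)$ instead of $r(n-2)$ and $r(n+2)$.

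\emph{Reduction to two residue classes modulo $4$.} Since $r(m)=0$ for $m\equiv3\pmod4$, the summand vanishes unless $n\equiv0$ or $1\pmod4$. In each of the two cases one factor is $\equiv1\pmod 4$: it is $n$ if $n\equiv1$, and $n+1$ if $n\equiv0$. That factor is expanded as in Lemma \ref{convr}:
\[
r(m)=8\sum_{k\mid m,\ k<\sqrt m}\chi_4(k)+4\chi_4(\sqrt m).
\]
The square term contributes $O(x^{1/2+\varepsilon})$, as before.

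\emph{Reducing the other factor to an odd progression.}
\begin{itemize}
\item Case $n\equiv1\pmod4$. The other factor $n+1\equiv2\pmod4$ satisfies $r(n+1)=r((n+1)/2)$.
\item Case $n\equiv0\pmod4$. We have $r(n)=r(n/4)$, but $n/4$ may again be even. I would therefore either iterate, splitting $n=2^{j}m$ with $m$ odd and using $r(2^jm)=r(m)$, or, more simply, sum over $m=n/4\le x/4$ in a residue class without imposing any parity.
\end{itemize}
In every case the remaining variable runs over a progression modulo $qk$ (possibly times a power of $2$, which changes the local factor at $2$ in Lemma \ref{rinAps}) up to a length $\asymp x$.

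\emph{Coprimality.} The divisor $k$ divides $n$ or $n+1$, and $(a(a+1),q)=1$, so $(k,q)=1$ automatically. The residue of the other factor is coprime to $k$: if $k\mid n$ then $n+1\equiv1\pmod k$, and if $k\mid n+1$ then $n\equiv-1\pmod k$. It is also coprime to $q$ by hypothesis on $a$. Hence Lemma \ref{rinAps} applies with modulus $qk$ (or $4qk$, adjusted by the factor $1+\chi_4(\cdot)$). It gives a main term
\[
\frac{c\,\pi(x-k^2)}{qk}\,h(q)h(k)
\]
with an explicit rational $c$ depending on the case, and an error $O\bigl((qk)^{-1/2}(x^{2/3+\varepsilon}+k^{4/3+\varepsilon})+1\bigr)$. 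Summing over $k\ll\sqrt x$ gives $O(q^{-1/2}x^{11/12+\varepsilon})$, exactly as in Lemma \ref{convr}.

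\emph{Main term.} The sum over $k$ is completed to infinity using
\[
\sum_{(k,q)=1}\chi_4(k)h(k)/k=\frac{2}{\pi}\prod_{p\mid q}\Bigl(1+\frac{\chi_4(p)}{p}\Bigr)^{-1}.
\]
The tail and the $k^2$-terms are bounded by $O(x^{1/2+\varepsilon})$ via the identity $\chi_4(k)h(k)=\sum_{k'\mid k}\chi_4(k/k')\mu(k')/k'$ and partial summation. All of this is verbatim from Lemma \ref{convr}, with $d$ replaced by $q$. The result is $c'\,h(q)q^{-1}\prod_{p\mid q}(1+\chi_4(p)/p)^{-1}x=c'g(q)x$.

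\emph{Expected main obstacle.} The difficulty is the constant: checking that the two $2$-adic cases, with their factors $1/2$ or $1/4$ from halving, the factor $1+\chi_4(\cdot)$ from Lemma \ref{rinAps} at modulus divisible by $4$, and the density $1/4$ of each class, add up to exactly $c'=8$. I would compute each case separately and sum. In the $n\equiv0\pmod4$ case I would first try to avoid iterating over $2$-powers by expanding only $r(n+1)$ and treating $r(n/4)$ over all $m$ in the progression, with no parity restriction, via Lemma \ref{rinAps} with odd modulus $qk$.
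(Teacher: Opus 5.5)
Your plan matches the paper's proof. You restrict to $n\equiv 0,1 \pmod 4$ and expand whichever of $n$, $n+1$ is $\equiv 1 \pmod 4$ into divisors $k<\sqrt{\cdot}$. You then apply Lemma~\ref{rinAps} to $m=n/4$ in a single class modulo the odd number $qk$, with no parity condition, and to $m=(n+1)/2$, which is forced to be odd, in a single class modulo $2qk$ with $4\nmid 2qk$. The constant you left open works out: in both cases the main term is $\pi(x-k^2)h(q)h(k)/(4qk)$, so each case contributes $8\cdot\frac{\pi}{4}\cdot\frac{2}{\pi}\,g(q)x=4g(q)x$, for a total of $8g(q)x$.
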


\begin{proof}
    Exactly one of $n$, $n+1$ is odd, and for odd $m$ one has $r(m) = 0$ unless $m \equiv 1 \pmod 4$. If $n \equiv 2 \pmod 4$ then $n+1 \equiv 3 \pmod 4$, so $r(n+1) = 0$.
    Similarly, if $n \equiv 3 \pmod 4$ then $r(n) = 0$. 
    Hence only $n \equiv 0, 1 \pmod 4$ contribute to the sum.
    We work these cases out separately.

    First, let $n \equiv 0 \pmod 4$ and denote its contribution by $S_1(x)$. As $n + 1\equiv 1 \pmod 4$ we have
    \[
        r(n+1) = 8\sum_{\substack{k \mid n+1\\ k < \sqrt{n + 1}}}\chi_4(k) + 4\chi_4(\sqrt{n+1}),
    \]
    and
    \[
        \begin{split}
            S_1(x) &= 8 \sum_{\substack{n \le x\\ n \equiv a \pmod q \\ n \equiv 0 \pmod 4}} r(n) \sum_{\substack{k \mid n+1\\ k < \sqrt{n + 1}}}\chi_4(k) + O(x^{1/2 + \varepsilon})\\
            &= 8 \sum_{k < \sqrt{x+1}} \chi_4(k) \sum_{\substack{n \le x \\ n \equiv 0 \pmod 4\\ n \equiv a \pmod q \\ n \equiv -1 \pmod k\\ n > k^2-1}} r(n) + O(x^{1/2 + \varepsilon}).
        \end{split}
    \]
    Here $k$ is odd and $(k,q) = 1$ --- otherwise the inner sum is empty, since a prime $p \mid (k,q)$ would divide $n+1$ and $q$, hence $a+1$, contrary to hypothesis. Writing $k \mid n+1$ as $n \equiv -1 \pmod k$ and substituting $n = 4m$, so that $r(n) = r(m)$, the conditions on $m$ become a single class $a' \pmod{qk}$ coprime to $qk$, and $(k^2-1)/4 < m \le x/4$ (note that $(k^2-1)/4$ is an integer as $k$ is odd). Since $qk$ is odd, Lemma \ref{rinAps} applies
    \[
        \begin{split}
            \sum_{\substack{n \le x \\ n \equiv 0 \pmod 4\\ n \equiv a \pmod q \\
                n \equiv -1 \pmod k\\ n > k^2-1}} r(n)
            &= \sum_{\substack{m \le x/4 \\ m \equiv a' \pmod {qk} \\
                m > (k^2-1)/4}} r(m)\\
            &= \frac{\pi (x-k^2)}{4qk}h(q)h(k)
               + O\bigl((qk)^{-1/2}(x^{2/3+\varepsilon}+k^{4/3+\varepsilon})\bigr),
        \end{split}
    \]
    the $+1$ left over from $x-k^2+1$ being absorbed into the error term.
    
    Summing the error over $k$ we obtain
    \[
        \begin{split}
            \sum_{k < \sqrt{x+1}}(qk)^{-1/2}\bigl(x^{2/3+\varepsilon}+k^{4/3+\varepsilon}\bigr)
            &\ll q^{-1/2}x^{2/3+\varepsilon}\sum_{k \ll \sqrt{x}} k^{-1/2} \\
            &\ll q^{-1/2} x^{11/12+\varepsilon}.
        \end{split}
    \]
    In the main term $\chi_4$ vanishes at even $k$, thus
    \[
        8\sum_{\substack{k<\sqrt{x+1}\\ (k,q)=1}}\chi_4(k)\frac{\pi(x-k^2)}{4qk}h(q)h(k)
        = \frac{2\pi h(q)}{q}\sum_{\substack{k<\sqrt{x+1}\\ (k,q)=1}}
          \chi_4(k)h(k)\Bigl(\frac{x}{k}-k\Bigr).
    \]
    As in the proof of Lemma \ref{convr} we have
    \[
        \sum_{\substack{k \le t \\ (k,q)=1}} \chi_4(k)h(k) \ll \tau(q)\log t,
    \]
    whence by partial summation
    \[
        \sum_{\substack{k<\sqrt{x+1}\\ (k,q)=1}}\chi_4(k)h(k)k \ll \tau(q)x^{1/2}\log x,
        \quad
        \sum_{\substack{k \ge \sqrt{x+1}\\ (k,q)=1}}\frac{\chi_4(k)h(k)}{k}
        \ll \frac{\tau(q)\log x}{x^{1/2}}.
    \]
    Since $h(q)\tau(q) \ll q^{\varepsilon}$, both contribute $O(x^{1/2+\varepsilon})$
    after multiplication by $2\pi h(q)/q$ and $2\pi h(q)x/q$ respectively. On completing the sum to infinity we get
    \[
        \sum_{\substack{k \ge 1\\ (k,q)=1}}\frac{\chi_4(k)h(k)}{k}
        = \prod_{p \nmid q}\Bigl(1+\frac{\chi_4(p)}{p}\Bigr)
        = \frac{2}{\pi}\prod_{p \mid q}\Bigl(1+\frac{\chi_4(p)}{p}\Bigr)^{-1},
    \]
    so that the main term equals
    \[
        \frac{2\pi h(q)x}{q}\cdot\frac{2}{\pi}
        \prod_{p \mid q}\Bigl(1+\frac{\chi_4(p)}{p}\Bigr)^{-1}
        = \frac{4x}{q}\prod_{p \mid q}\frac{p-\chi_4(p)}{p+\chi_4(p)} = 4g(q)x.
    \]
    Therefore
    \[
        S_1(x) = 4g(q)x + O\bigl(q^{-1/2}x^{11/12+\varepsilon}+x^{1/2+\varepsilon}\bigr).
    \]
    
    Next, let $n \equiv 1 \pmod 4$ and denote its contribution by $S_2(x)$. This
    time $n$ itself is odd and $n \equiv 1 \pmod 4$, so
    \[
        r(n) = 8\sum_{\substack{k \mid n\\ k < \sqrt{n}}}\chi_4(k) + 4\chi_4(\sqrt{n}),
    \]
    therefore
    \[
        S_2(x) = 8\sum_{k<\sqrt{x}}\chi_4(k)
        \sum_{\substack{n \le x \\ n \equiv 1 \pmod 4\\ n \equiv a \pmod q \\
            n \equiv 0 \pmod k\\ n > k^2}} r(n+1) + O(x^{1/2+\varepsilon}).
    \]
    Again $k$ is odd and $(k,q)=1$, since a prime $p \mid (k,q)$ would divide $n$
    and $q$, hence $a$. Substituting $n+1 = 2m$, so that $r(n+1)=r(m)$, the
    condition $n \equiv 1 \pmod 4$ makes $m$ odd, the conditions on $m$ become a
    single class $a'' \pmod{2qk}$ coprime to $2qk$, and
    $(k^2+1)/2 < m \le (x+1)/2$. As $\chi_4(2)=0$ we have $h(2qk)=h(q)h(k)$, and
    since $4 \nmid 2qk$ Lemma \ref{rinAps} gives
    \[
        \begin{split}
            \sum_{\substack{m \le (x+1)/2 \\ m \equiv a'' \pmod{2qk} \\
                m > (k^2+1)/2}}& r(m)
            = \frac{\pi}{2qk}\Bigl(\frac{x+1}{2}-\frac{k^2+1}{2}\Bigr)h(q)h(k) \\
               &+ O\bigl((qk)^{-1/2}(x^{2/3+\varepsilon}+k^{4/3+\varepsilon})\bigr)\\
            &= \frac{\pi (x-k^2)}{4qk}h(q)h(k)
               + O\bigl((qk)^{-1/2}(x^{2/3+\varepsilon}+k^{4/3+\varepsilon})\bigr).
        \end{split}
    \]
    Proceeding as in the case of $S_1(x)$ we conclude
    \[
        S_2(x) = 4g(q)x + O\bigl(q^{-1/2}x^{11/12+\varepsilon}+x^{1/2+\varepsilon}\bigr).
    \]
    Adding $S_1(x)$ and $S_2(x)$ completes the proof.
\end{proof}

\subsection{The linear sieve} \label{linear-sieve-section}

Let $\mathcal A = (a_n)_{n \le x}$ be a finite sequence of non-negative reals, let $\mathcal P$ be a set of primes, $P(z) = \prod_{p < z,\, p \in \mathcal P}p$, and
\[
    S(\mathcal A, \mathcal P, z) = \sum_{\substack{n\le x\\ (n,P(z)) = 1}}a_n, \quad
    \mathcal A_d = \sum_{\substack{n\le x\\ n \equiv 0 \pmod  d}} a_n .
\]
Suppose that there are $X > 0$, a multiplicative function $g$ with $0 \le g(p) < 1$ for $p \in \mathcal P$, and real numbers $r_d$ such that
\[
    \mathcal A_d = g(d)X+r_d
\]
for all square-free $d \mid P(z)$, and that $g$ satisfies the linear sieve condition
\[
    \prod_{\substack{w\le p<z\\ p\in\mathcal P}}(1-g(p))^{-1}\le\frac{\log z}{\log w}\left(1+\frac{L}{\log w}\right)
\]
for $2 \le w \le z$ and some constant $L \ge 1$. 
Put 
\[
    V(z) = \prod_{\substack{p < z \\ p \in \mathcal P}} \bigl(1-g(p)\bigr).
\]

\begin{lemma} \label{linear-sieve-lem}
    Let $D \ge z \ge 2$ and $s = \log D/\log z$. 
    For $2 \le s \le 4$ define
    \[
        f(s)=\frac{2e^{\gamma}\log(s-1)}{s}.
    \]
    Similarly, for $1 \le s \le 3$ set $F(s) = 2e^{\gamma}/{s}$.
    Under the above hypotheses we have
    \[
        \begin{split}
            S(\mathcal A,\mathcal P,z) &\ge XV(z)\big(f(s) - B(\log D)^{-1/14}\big) - \sum_{\substack{d < D\\ d\mid P(z)}}|r_d|,\\
            S(\mathcal A,\mathcal P,z) &\le XV(z)\big(F(s) + B(\log D)^{-1/14}\big) + \sum_{\substack{d < D\\ d\mid P(z)}}|r_d|,
        \end{split}
    \]
    where $B$ depends only on $L$.
\end{lemma}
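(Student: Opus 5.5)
This is the classical linear sieve theorem of Jurkat and Richert, in the form with explicit dependence on the constant $L$ of the sieve dimension condition. I would not re-derive it; I would deduce it from the standard statement and compute the sieve functions $f$ and $F$ explicitly. The source is Halberstam--Richert, \emph{Sieve Methods}, Theorem 8.4 (the version with error $(\log D)^{-1/14}$); Iwaniec's treatment of Rosser's sieve, or Friedlander--Iwaniec, \emph{Opera de Cribro}, Theorem 11.13, would also do.

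\textbf{Step 1: the general theorem.} Under exactly our hypotheses (the linear condition with constant $L$, and $\mathcal A_d = g(d)X + r_d$ for square-free $d \mid P(z)$), that theorem gives, for $D \ge z$,
\[
S(\mathcal A,\mathcal P,z) \ge XV(z)\bigl(f(s) - B(\log D)^{-1/14}\bigr) - \sum_{\substack{d<D\\ d\mid P(z)}}|r_d|,
\]
and the analogous upper bound with $F(s)$. The remainder has this shape because the Rosser (beta-sieve) weights $\lambda_d^{\pm}$ are supported on $d<D$, $d\mid P(z)$, and satisfy $|\lambda_d^{\pm}|\le 1$. Here $f$ and $F$ are the continuous solutions of the delay-differential system
\[
sF(s)=2e^{\gamma}\ (1\le s\le 3),\qquad f(s)=0\ (0<s\le 2),
\]
\[
(sF(s))'=f(s-1)\ (s>3),\qquad (sf(s))'=F(s-1)\ (s>2).
\]

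\textbf{Step 2: evaluate $f$ and $F$ in the stated ranges.} For $1\le s\le 3$ the definition gives $F(s)=2e^\gamma/s$ directly. For $2\le s\le 4$ we have $s-1\in[1,3]$, so $F(s-1)=2e^\gamma/(s-1)$. Using $f(2)=0$ and integrating $(tf(t))'=F(t-1)$ from $2$ to $s$ gives
\[
sf(s)=\int_2^s\frac{2e^\gamma}{t-1}\,dt=2e^\gamma\log(s-1),
\]
which is the stated formula for $f$. The restriction of $f$ to $s\le 4$ and of $F$ to $s\le 3$ is exactly what keeps these formulas valid.

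\textbf{Step 3: the hypotheses and the main obstacle.} The one thing to check is that the hypotheses line up with the cited theorem. That theorem is phrased with the condition $0\le g(p)<1$ and the upper bound $\frac{\log z}{\log w}\bigl(1+\frac{L}{\log w}\bigr)$ on the product over $w\le p<z$; this is precisely the hypothesis assumed here. Its constant $B$ depends only on $L$; in Halberstam--Richert it also depends on a constant $A_1$ bounding $1/(1-g(p))$, which I would absorb here, since in applications $g(p)\le 1/2$ for large $p$. With $s=\log D/\log z$, the lemma then follows immediately. No real difficulty is expected beyond this bookkeeping.
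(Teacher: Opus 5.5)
Your proposal matches the paper, which likewise proves this lemma only by citing Halberstam--Richert, Theorem 8.4 (with Jurkat--Richert and Nathanson as alternative references). Your derivation of $f(s)=2e^\gamma\log(s-1)/s$ on $[2,4]$ from $(sf(s))'=F(s-1)$ is a correct check that the paper omits. One justification should change: the constant $A_1$ does not have to be absorbed by appeal to applications. Taking $w=p$, $z=p+1$ in the sieve condition gives $(1-g(p))^{-1}\le\frac{\log 3}{\log 2}\bigl(1+\frac{L}{\log 2}\bigr)$, so $A_1$ is already bounded in terms of $L$ alone, and this is what lets $B$ depend only on $L$.
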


This is Theorem~8.4 of \cite{MR424730} (see also \cite{MR202680} and \cite[Theorem 9.7]{MR1477155}).

Finally we recall Mertens' formula in the weak form
\[
    \prod_{2<p<z}\left(1-\frac1p\right)\asymp\frac {1}{\log z}
\]
which gives, for $g$ as in Lemma \ref{convr},
\[
    V(z) = \prod_{2 < p < z}(1-g(p)) \asymp \frac{1}{\log z},
\]
since $\prod_{p>2}(1-g(p))(1-1/p)^{-1}$ converges absolutely to a positive limit.
\section{Reduction to sums of two squares}\label{initial-reduction}

Recall that $s_{\Gamma}(x)$ is the number of quadruples $(x_1,x_2,x_3,x_4) \in \Z^4$ subject to $n = x_1^2 + x_2^2 + x_3^2 + x_4^2 \le x$, $x_1x_4-x_2x_3 = 1$ and $\mu^2(n) = 1$.

Changing variables as in \cite{MR2486486}, we have
\[
    y_1 = x_1 + x_4, \quad 
    y_2 = x_2 - x_3, \quad 
    y_3 = x_1 - x_4, \quad 
    y_4 = x_2 + x_3
\]
or, equivalently,
\[
    x_1 = \frac{y_1 + y_3}{2}, \quad 
    x_2 = \frac{y_2 + y_4}{2}, \quad 
    x_3 = \frac{y_4 - y_2}{2}, \quad 
    x_4 = \frac{y_1 - y_3}{2}.
\]

We readily see that under this change of variables in $s_\Gamma$ we have
\[
    y_1^2+y_2^2=n+2, \quad (y_1^2+y_2^2)-(y_3^2+y_4^2)=4, \quad \mu^2(n)=1.
\]

Note that if $n$ is an odd integer, then $x_i \in \Z$ implies $y_i \in \Z$. 
Conversely, given $y_i$ we have that $y_1$ has the opposite parity from $y_2$, same holds for $y_3$ and $y_4$.
Then given $y_i$ precisely one of the quadruples $(y_1,y_2,y_3,y_4)$ and $(y_1,y_2,y_4,y_3)$ gives rise to $x_i$.
Thus the number of integer solutions in $x$'s is half of the number of integer solutions in $y$'s.

For even $n$ the two sums $y_1^2 + y_2^2$ and $y_3^2 + y_4^2$ differ by $4$, hence are congruent modulo $4$, so either all four $y_i$ are even or all four are odd --- in both cases the $x_i$ are integers and the correspondence is one-to-one.
\section{Square-free case: proof of Theorem \ref{sqfree-theorem}}

Let $r(n)$ denote the number of representations of $n$ as the sum of two squares.
The number of integer quadruples $(y_1,y_2,y_3,y_4)$  subject to the above conditions equals $r(n+2)r(n-2)$, as the pairs $(y_1, y_2)$ and $(y_3, y_4)$ are chosen independently.
By the above, the odd $n$ contribute half of this number and the even $n$ all of it, so that
\begin{equation} \label{sqfree-decomp}
    s_{\Gamma}(x) = \frac{1}{2}s_{\Gamma}^{\text{odd}}(x)+ s_{\Gamma}^{\text{even}}(x),
\end{equation}
where
\[
    \begin{split}
        s_{\Gamma}^{\text{odd}}(x) &= \sum_{\substack{n\le x\\ n \text{ odd}}}\mu^2(n)r(n-2)r(n+2),
        \\
        s_{\Gamma}^{\text{even}}(x) &= 
        \sum_{\substack{n\le x\\ n \text{ even}}}\mu^2(n)r(n-2)r(n+2).
    \end{split}
\]

Notice that only $n \equiv 3 \pmod 4$ contribute to $s_{\Gamma}^{\text{odd}}(x)$, since $r(n \pm 2) = 0$
for $n \equiv 1 \pmod 4$. An even square-free $n$ is of the form $n = 4m+2$ with $2m+1$
square-free, and then $n-2 = 4m$, $n+2 = 4(m+1)$, so that $r(n-2)r(n+2) = r(m)r(m+1)$ as
$r(4k) = r(k)$, while $\mu^2(n) = \mu^2(2m+1)$. 
Hence
\[
    s_{\Gamma}^{\text{even}}(x) = \sum_{m \le (x-2)/4}\mu^2(2m+1)r(m)r(m+1).
\]

The same decomposition holds for $s_{\Gamma}^{(k)}(x)$ with the additional condition $\omega(n) \le  k$ inserted throughout.

Below we estimate the odd and even contributions separately, see \eqref{sq-free-odd-contrib} and \eqref{sq-free-even-contrib}, which brings us to
\[
    s_{\Gamma}(x) = \biggl(\frac{1}{2} (8C) + 2C\biggr)x + O(x^{11/12+\varepsilon}) = 6C x + O(x^{11/12+\varepsilon}).
\]

\subsection{Odd contribution}

We begin with evaluating $s_{\Gamma}^{\text{odd}}(x)$.
For $n \ge 3$ write $a_n = r(n-2)r(n+2)$, and put
\[
    s_{\Gamma}^{\text{odd}}(x) = \sum_{\substack{n \le x\\ n \equiv 3 \pmod 4}}\mu^2(n)a_n,\quad
    s_{\Gamma, d}^{\text{odd}}(x) = \sum_{\substack{n\le x\\ n \equiv 3 \pmod 4\\ n \equiv 0 \pmod d}} a_n.
\]
Note that $a_n \ge 0$, and that $a_n \ll \tau(n+2)\tau(n-2) \ll x^{\varepsilon}$ for $n \le x$. Since we sum over $n \equiv 3 \pmod 4$, then $s_{\Gamma, d}^{\text{odd}}(x) = 0$ for even $d$, so we can freely impose the condition of $d$ being odd.

By Lemma \ref{convr} we have
\[
    s_{\Gamma, d}^{\text{odd}}(x) = 8g(d)x+O\big(d^{-1/2}x^{11/12+\varepsilon}+x^{1/2+\varepsilon}\big).
\]

Using $\mu^2(n) = \sum_{d^2\mid n}\mu(d)$ we obtain
\[
    s_{\Gamma}^{\text{odd}}(x) = \sum_{\substack{n \le x \\ n \equiv 3 \pmod 4}} a_n \sum_{d^2 \mid n} \mu(d) = \sum_{d \le \sqrt{x}} \mu(d) s_{\Gamma, d^2}^{\text{odd}}(x).
\]
We truncate the sum at $y=x^{1/4}$ and get
\[
    s_{\Gamma}^{\text{odd}}(x) = \sum_{d \le y} \mu(d) s_{\Gamma, d^2}^{\text{odd}}(x) + O \biggl( \sum_{y < d \le \sqrt{x}} s_{\Gamma, d^2}^{\text{odd}}(x) \biggr).
\]
Bounding the summand in the error term trivially via
\[
    s_{\Gamma, d^2}^{\text{odd}}(x) \le x^\varepsilon \sum_{\substack{n \le x \\ n \equiv 0 \pmod {d^2}}} 1 \le x^\varepsilon \biggl(\frac{x}{d^2}+1 \biggr)
\]
we get
\[
    \sum_{y < d \le \sqrt x}s_{\Gamma, d^2}^{\text{odd}}(x) \ll  x^{\varepsilon} \sum_{y < d \le \sqrt x}\left(\frac{x}{d^2}+1\right)\ll x^{\varepsilon}\left(\frac xy+\sqrt x\right)\ll x^{3/4+\varepsilon}.
\]

For the main sum we apply Lemma \ref{convr} with the odd modulus $d^2$, which gives
\[
    \begin{split}
        \sum_{\substack{d \le y\\ d\ \mathrm{odd}}} \mu(d) s_{\Gamma, d^2}^{\text{odd}}(x) &= 8x \sum_{\substack{d\le y\\ d\ \text{odd}}} \mu(d) g(d^2) + O\bigl(x^{11/12+\varepsilon} \log y + x^{1/2+\varepsilon}y\bigr)\\
        &= 8x \sum_{\substack{d\le y\\ d\ \text{odd}}} \mu(d) g(d^2) + O\bigl(x^{11/12+\varepsilon}\bigr).
    \end{split}
\]
Since $g(p^2) \le 2/p^2$ we have
\[
    g(d^2) \le \frac{2^{\omega(d)}}{d^{2}} \le \frac{\tau(d)}{d^2} \ll \frac{1}{d^{2-\varepsilon}},
\]
so the series $\sum_{d\ \mathrm{odd}}\mu(d)g(d^2)$ converges absolutely.
On completing the sum to infinity we get
\[
    \sum_{\substack{d \ge 1\\ d\ \text{odd}}}\mu(d)g(d^2) = \prod_{p > 2} \bigl(1-g(p^2)\bigr) = \prod_{p > 2}\biggl(1-\frac{p-\chi_4(p)}{p^2(p+\chi_4(p))}\biggr)=C,
\]
while the tail bounds as
\[
    \sum_{\substack{d > y \\ d\ \text{odd}}}\mu(d)g(d^2) \ll \sum_{d > y} \frac{1}{d^{2-\varepsilon}} \ll \frac{1}{y^{1-\varepsilon}} \ll x^{-1/4+\varepsilon}.
\]
Therefore we  can conclude
\begin{equation} \label{sq-free-odd-contrib}
    s_{\Gamma}^{\text{odd}}(x) = 8Cx + O(x^{11/12+\varepsilon}).
\end{equation}

\subsection{Even contribution}
We evaluate $s_{\Gamma}^{\mathrm{even}}(x)$
via Lemma \ref{convr-even}.
Our goal is to show that for every $\varepsilon > 0$ we have $s_{\Gamma}^{\text{even}}(x) = 2Cx+O(x^{11/12+\varepsilon})$,
with $C$ as in Theorem \ref{sqfree-theorem}.
First, we have 
\[
    s_{\Gamma}^{\mathrm{even}}(x)=\sum_{n \le X}\mu^2(2n+1)r(n)r(n+1)
\]
with $X=(x-2)/4$. 
Expanding $\mu^2(2n+1) = \sum_{d^2 \mid 2n+1}\mu(d)$, with $d$ odd, the condition
$d^2 \mid 2n+1$ is equivalent to $n \equiv b \pmod{d^2}$ with $b = (d^2-1)/2$,
and this class satisfies $2b \equiv -1$ and $2(b+1) \equiv 1 \pmod{d^2}$, so
that $(b(b+1),d^2) = 1$. Since $d^2 \mid 2n+1 \le 2X+1$, we obtain
\[
    s_{\Gamma}^{\mathrm{even}}(x) = \sum_{d \le \sqrt{2X+1}} \mu(d)
    \sum_{\substack{n \le X\\ n \equiv b \pmod{d^2}}} r(n)r(n+1).
\]
As in the case of the odd contribution take $y = x^{1/4}$. Bounding the inner
sum trivially via
\[
    \sum_{\substack{n \le X\\ n \equiv b \pmod{d^2}}} r(n)r(n+1)
    \le x^{\varepsilon}\sum_{\substack{n \le X\\ n \equiv b \pmod{d^2}}} 1
    \le x^{\varepsilon}\biggl(\frac{X}{d^2}+1\biggr),
\]
we bound the tail by
\[
    \begin{split}
        \sum_{y < d \le \sqrt{2X+1}}\ \sum_{\substack{n \le X\\ n \equiv b \pmod{d^2}}}
        r(n)r(n+1)
        &\ll x^{\varepsilon}\sum_{y < d \le \sqrt{2X+1}}\biggl(\frac{X}{d^2}+1\biggr)\\
        &\ll x^{\varepsilon}\biggl(\frac{X}{y}+\sqrt{x}\biggr) \ll x^{3/4+\varepsilon}.
    \end{split}
\]
Since $d$ is odd, so is $d^2$, and the implied constant in Lemma
\ref{convr-even} depends only on $\varepsilon$, so the estimate it provides is
uniform in $d$, the class $b$ varying with $d$. As $X \ll x$, Lemma
\ref{convr-even} with $q = d^2$ gives
\begin{equation} \label{sq-free-even-contrib}
    \begin{split}
        s_{\Gamma}^{\text{even}}(x) &= 8X\sum_{\substack{d\le y\\ d\ \mathrm{odd}}}\mu(d)g(d^2) + O\bigl(X^{11/12+\varepsilon}+yX^{1/2+\varepsilon}+x^{3/4+\varepsilon}\bigr)\\
        &= 8CX + O(x^{11/12+\varepsilon}) = 2Cx + O(x^{11/12+\varepsilon}),
    \end{split}
\end{equation}
since $\sum_{d\ \mathrm{odd}}\mu(d)g(d^2)=\prod_{p>2}(1-g(p^2))=C$ and the tail $d > y$ contributes $ \ll x y^{-1+\varepsilon}$.

\section{Almost primes: proof of Theorem~\ref{almostprimes-theorem-conditional}}\label{sieve-section}

We apply the linear sieve, i.e. Lemma \ref{linear-sieve-lem} to the sequence
\[
    \mathcal A = (a_n)_{n \le x}, \quad a_n = \begin{cases}
                                                    r(n-2)r(n+2),&n \equiv 3 \pmod 4,\\ 0,&\text{otherwise},
                                                \end{cases}
\]
with $\mathcal P$ the set of odd primes. The hypotheses of Section \ref{linear-sieve-section} hold with
\[
    X = 8x,\quad g(d) = \frac{1}{d} \prod_{p \mid d}\frac{p-\chi_4(p)}{p+\chi_4(p)}
\]
and $r_d = \mathcal A_d-8g(d)x$ for odd square-free $d$.

Note that $0 \le g(p) < 1$, and the linear sieve condition is satisfied with some absolute $L$. 
By definition of $g$ and Mertens' theorem we have
\[
    \log\bigl((1-g(p))^{-1}\bigr) = \frac{1}{p}+O(p^{-2}), \quad \sum_{w \le p < z}\frac{1}{p} = \log\biggl(\frac{\log z}{\log w}\biggr)+O\bigl((\log w)^{-1}\bigr).
\]

Let $0 < \theta \le 1$ be such that $R(\theta)$ holds. By Lemma \ref{convr} we can always take $\theta = 1/6$, since for $D \le x^{1/6-4\varepsilon}$
\begin{equation}\label{eq:level}
    \begin{split}
        \sum_{\substack{d\le D\\ d\ \mathrm{odd\ square\text{-}free}}}|r_d| &\ll \sum_{d\le D}\big(d^{-1/2}x^{11/12+\varepsilon}+x^{1/2+\varepsilon}\big) \\
        &\ll D^{1/2}x^{11/12+\varepsilon}+Dx^{1/2+\varepsilon}\ll x^{1-\varepsilon}.
    \end{split}
\end{equation}
Note that using only the weaker error term $O(x^{11/12+\varepsilon})$, i.e. the one of \cite[Lemma 4.2]{MR2486486} one would get $\theta = 1/12$ here.

Take $N = \lfloor2/\theta \rfloor$. Since $2/\theta < N+1$, we can fix $\varepsilon>0$ and $0 < \eta \le 2$ so that
\[
    \frac{2 + \eta}{\theta - 4\varepsilon} < N+1 .
\]
For $\theta = 1/6$ and $N = 12$ one may take $\varepsilon = 10^{-3}$ and $\eta = 10^{-2}$. Now set
\[
    D = x^{\theta-4\varepsilon}, \quad s = 2+\eta,\quad z = D^{1/s}.
\]
By $R(\theta)$ with $A = 2$, or by \eqref{eq:level} when $\theta=1/6$, we have 
\[
    \sum_{\substack{d<D \\ d\mid P(z)}}|r_d| \ll \frac{x}{\log^2 x}.
\]
Hence Lemma \ref{linear-sieve-lem} together with $V(z) \asymp 1/\log z$ gives
\[
    S(\mathcal A,\mathcal P,z) \ge 8xV(z)\biggl(f(s)-B(\log D)^{-1/14} \biggr)-O\big(x(\log x)^{-2}\big)\gg\frac{x}{\log x}
\]
for all sufficiently large $x$ (here we used that $f(s) = 2e^\gamma\log(1+\eta)/(2+\eta) > 0$).

Every $n$ with $a_n \ne 0$ and $(n, P(z)) = 1$ is odd and has all its prime factors at least $z$. Since $n \le x$, the number $\Omega(n)$ of prime factors of $n$ counted with multiplicity satisfies
\[
    \Omega(n) \le \frac{\log x}{\log z} = \frac{s}{\theta - 4\varepsilon} = \frac{2+\eta}{\theta - 4\varepsilon}< N+1,
\]
by the choice of $\varepsilon$ and $\eta$, so that $\Omega(n)\le N$.

It remains to discard the $n$ that are not square-free. Such an $n$ counted in $S(\mathcal A,\mathcal P,z)$ is divisible by $p^2$ for some prime $p$ with $z\le p\le\sqrt x$, and by the trivial bound for $r$
\[
    \sum_{z \le p \le \sqrt x}\ \sum_{\substack{n \le x\\ p^2 \mid n}}a_n\ll x^{\varepsilon} \sum_{z \le p  \le \sqrt{x} }\biggl(\frac{x}{p^2}+1\biggr)\ll x^{\varepsilon}\biggl(\frac xz+\sqrt x\biggr) = o\biggl(\frac{x}{\log x}\biggr),
\]
because $z$ is a fixed positive power of $x$. Hence
\[
    \sum_{\substack{n\le x \\ n \equiv 3 \pmod 4\\ \mu^2(n) = 1 \\ \omega(n) \le N}} r(n-2)r(n+2) \ge S(\mathcal A,\mathcal P,z)-o\biggl(\frac x{\log x}\biggr) \gg \frac{x}{\log x},
\]
where we used that $\omega(n) = \Omega(n)$ for square-free $n$. By \eqref{sqfree-decomp}, with the condition $\omega(n) \le N$ inserted, the left-hand side is at most $2s_{\Gamma}^{(N)}(x)$, and the lower bound $s_{\Gamma}^{(N)}(x) \gg x/\log x$ follows for large $x$ (for $x$ bounded it is trivial since $s_{\Gamma}^{(N)}(x)\ge s_{\Gamma}^{(N)}(3) > 0$). 
This finishes the proof of Theorem \ref{almostprimes-theorem-conditional}. Taking $\theta = 1/6$ and $N = 12$ gives $s_{\Gamma}^{(12)}(x)\gg x/\log x$ unconditionally.

\begin{remark}
   The exponent $1/6$ in the unconditional level of distribution comes from the individual error term $O(d^{-1/2}x^{11/12+\varepsilon})$ in Lemma \ref{convr}, which in turn comes from the summation of terms coming from Lemma \ref{rinAps} over the moduli $4dk$ with $k< \sqrt x$. The value $7$ is what the level $1/6$ allows. By Theorem \ref{almostprimes-theorem-conditional} the bound $\omega(n) \le k$ requires the stronger condition
    \[
        \theta > \biggl(k+1-\frac{\log 4}{\log 3}\biggr)^{-1},
    \]
    which $\theta = 1/6$ satisfies for $k = 7$ and fails for $k = 6$. Thus the number $7$ is not improved by any rearrangement of the sieve, only by a larger level of distribution.
\end{remark}
\section{The weighted sieve: proof of Theorem \ref{almostprimes-theorem}}\label{weighted-section}

We keep the notation of Section \ref{sieve-section}, see
\cite[Chapter 5]{MR1836967} for the general theory of the weighted sieve.
Set $\mathcal{A} = (a_n)$ with $a_n = r(n-2)r(n+2)$ for $n \le x$, $n \equiv 3\pmod 4$ and $a_n = 0$ otherwise, $\mathcal P$ the odd primes, $X = 8x$, $g$ as in Lemma \ref{convr}, $r_d = \mathcal{A}_d - 8g(d)x$, and 
\[
    V(z) = \prod_{2 < p < z}\bigl(1-g(p)\bigr) \asymp \frac{1}{\log z}.
\]
Fix $\varepsilon > 0$ and put
\[
    \alpha = 1/6 - 4\varepsilon, \quad D = x^{\alpha}, \quad z = D^{1/4} = x^{\alpha/4},\quad y = x^{3/20},\quad \lambda = 4/3.
\]
Taking $\varepsilon$ small enough so that $\alpha-3/20 \ge 1/100$ (i.e. $\varepsilon \le 1/600)$ we have $z < y < D$ and $D/y = x^{\alpha-3/20}\ge x^{1/100}$. By \eqref{eq:level}, the remainders satisfy $\sum_{m\le D}|r_m|\ll x^{1-\varepsilon}$, where the sum is taken over odd square-free $m$.

Consider the weighted sum
\[
    W(x) = \sum_{\substack{n\le x\\ (n,P(z))=1}} a_n b_n, \quad\text{where }
b_n=1-\frac1\lambda\sum_{\substack{p\mid n\\ z\le p<y}}\Big(1-\frac{\log p}{\log y}\Big).
\]

Let us begin with interpreting what $b_n >0$ implies for us.
Let $n$ be square-free with $(n,P(z)) = 1$ and $b_n > 0$. All prime factors of $n$ are at least $z$. For $p \ge y$ the quantity $1 - \log p/\log y$ is at most $0$, hence
\[
    \begin{split}
        \sum_{\substack{p\mid n\\ p < y}}\biggl(1-\frac{\log p}{\log y}\biggr) &\ge \sum_{p \mid n}\biggl(1-\frac{\log p}{\log y}\biggr) = \omega(n)-\frac{\log n}{\log y} \\
        &\ge \omega(n) - \frac{\log x}{\log y} = \omega(n)-\frac{20}{3}.
    \end{split}
\]
Therefore $b_n > 0$ forces $\omega(n)-20/3<\lambda = 4/3$, that is $\omega(n) < 8$. Since $b_n\le1$ for every $n$, it follows that
\[
    \sum_{\substack{n\le x \\ n\equiv3 \pmod 4\\ \mu^2(n) = 1 \\ \omega(n) \le 7}} r(n-2)r(n+2) \ge \sum_{\substack{n\le x\\ (n,P(z)) = 1\\ \mu^2(n) = 1}}a_n b_n \ge W(x) -\sum_{\substack{n\le x\\ (n,P(z))=1\\ \mu^2(n) = 0}} a_n|b_n| .
\]
For $n\le x$ with all prime factors at least $z$ we have
\[
    |b_n| \le 1 + \frac{\omega(n)}{\lambda}\le 1 + \frac{\log x}{\lambda\log z}\ll 1,
\]
and such an $n$ that is not square-free is divisible by $p^2$ for some prime $p$ with $z\le p\le\sqrt x$. Hence, by the trivial bound for $r$, the last sum above is $\ll x^{\varepsilon}(x/z+\sqrt x)=o(x/\log x)$. 
It therefore remains to prove that $W(x) \gg x/\log x$.

We are now ready to apply linear sieve, i.e. Lemma \ref{linear-sieve-lem} and bound $W(x)$.
Interchanging the order of summation we get
\[
    W(x) = S(\mathcal A,\mathcal P,z)-\frac{1}{\lambda} \sum_{z \le p< y}\biggl(1-\frac{\log p}{\log y}\biggr)S(\mathcal A_p,\mathcal P,z),
\]
where $\mathcal A_p=(a_n)_{p\mid n}$
and $S(\mathcal A_p,\mathcal P,z) = \sum_{p\mid n,\,(n,P(z))=1}a_n$.
Note that $p \ge z$, so the condition $(n,P(z)) = 1$ does not exclude $p \mid n$.

For the first term we apply Lemma \ref{linear-sieve-lem} with level $D = z^4$, so that $s=4$ and $f(4) = \tfrac{e^\gamma}{2}\log 3$, which gives
\[
    S(\mathcal A,\mathcal P,z)\ge XV(z)\biggl(\frac{e^\gamma}{2}\log 3 - B(\log D)^{-1/14} \biggr)-\sum_{\substack{d<D \\ d\mid P(z)}}|r_d| .
\]

For $z\le p<y$ and odd square-free $d$ coprime to $p$ we have
\[
    \sum_{\substack{n\le x\\ n\equiv 0 \pmod {pd}}} a_n = 8g(pd)x + r_{pd} = 8g(p)x g(d)+r_{pd},
\]
so the sequence $\mathcal A_p$ satisfies the hypotheses of Section \ref{linear-sieve-section} with $X_p = 8g(p)x$, the same density $g$, and remainders $r_{pd}$. Its level of distribution is $D_p = D/p$, and
\[
    s_p = \frac{\log D_p}{\log z} = 4\biggl(1-\frac{\log p}{\log D}\biggr)
\]
ranges from $4(1-3/(20\alpha)) < 1$ to $3$. 
Further, Lemma \ref{linear-sieve-lem} applies when $s_p \ge 1$, so we first work out the case $s_p < 1$. We have
\[
    S(\mathcal{A}_p, \mathcal{P},z) \le S(\mathcal{A}_p, \mathcal{P},D_p).
\]
By the sieve condition we have
\[
    \frac{V(D_p)}{V(z)} = \prod_{D_p \le p < z} \bigl(1-g(p) \bigr)^{-1} \le \frac{\log z}{\log D_p} \biggl( 1 + \frac{L}{\log D_p} \biggr) = s_p^{-1} (1+o(1)),
\]
where the error is uniform as $D_p \ge D/y \ge x^{1/100}$, therefore Lemma \ref{linear-sieve-lem} applies.

Now we have $s_p \ge 1$ and therefore can apply Lemma \ref{linear-sieve-lem}, the latter applies because $D_p \ge D/y\ge x^{1/100}$. In both cases
\[
    S(\mathcal A_p,\mathcal P,z) \le 8g(p)x V(z) \frac{2e^\gamma}{s_p}\,(1+o(1))+\sum_{\substack{d < D/p\\ d\ \mathrm{odd\ square\text{-}free}}}|r_{pd}| ,
\]
where the term $o(1)$, which comes from $B(\log(D/p))^{-1/14}$ and from $L/\log D_p$, is uniform in $p$ because $D_p\ge x^{1/100}$. Every odd square-free $m<D$ occurs as $m=pd$ for at most $\omega(m)\ll \log D$ primes $p$, so the total contribution of all the remainder terms above to $W(x)$ is $\ll x^{1-\varepsilon}\log x=o(x/\log x)$.

We proceed with summing over $p$.
Write $t = \log p/\log x$, so that the condition $z \le p< y$ becomes $\alpha/4 \le t < 3/20$, while $1/s_p = \alpha/(4(\alpha-t))$ and $1-\log p/\log y=1-20t/3$. 
Let $\phi$ be any function that is continuously differentiable on $[\alpha/4,3/20]$. 
Since $g(p)=1/p+O(p^{-2})$ and $\sum_{p\le v}1/p=\log\log v+c_0+O(1/\log v)$, partial summation gives
\[
    \sum_{z \le p < y}g(p) \phi \biggl(\frac{\log p}{\log x}\biggr) = \int_{\alpha/4}^{3/20} \phi(t)\frac{dt}{t} + O\biggl(\frac{1}{\log x}\biggr),
\]
the implied constant depending on $\phi$.
Take 
\[
    \phi(t) = \biggl(1-\frac{20}{3}t\biggr) \cdot 2e^\gamma\frac{\alpha}{4(\alpha-t)},
\]
which is smooth on the interval $[\alpha/4,3/20]$ since $\alpha-t \ge \alpha - 3/20 > 0$.
Then we have
\[
    \frac{1}{\lambda} \sum_{z \le p < y}\biggl(1-\frac{\log p}{\log y}\biggr) S(\mathcal A_p,\mathcal P,z)\le XV(z)\biggl(\frac{1}{\lambda}J(\alpha)+o(1)\biggr)+o\biggl(\frac{x}{\log x}\biggr),
\]
where
\[
    J(\alpha) = \frac{e^\gamma\alpha}{2}\int_{\alpha/4}^{3/20}\frac{1-20t/3}{t(\alpha-t)}dt.
\]
Setting $\eta = \log y / \log x = 3/20$ inside the integral we have 
\[
    \frac{\alpha(1-t/\eta)}{t(\alpha-t)} = \frac{1}{t} - \frac{\alpha-\eta}{\eta}\cdot\frac{1}{\alpha-t},
\]
hence
\[
    J(\alpha) = \frac{e^\gamma}{2}\biggl(\log\frac{4\eta}{\alpha}-\frac{\alpha-\eta}{\eta}\log\frac{3\alpha}{4(\alpha-\eta)}\biggr).
\]
Both terms depend continuously on $\alpha$ near $1/6$, and for $\alpha=1/6$, that is for $\varepsilon=0$, we have $4\eta/\alpha=18/5$, $(\alpha-\eta)/\eta=1/9$ and $3\alpha/(4(\alpha-\eta))=15/2$, whence
\[
    J(1/6) = \frac{e^\gamma}{2}\biggl(\log\frac{18}{5}-\frac19\log\frac{15}{2}\biggr).
\]

Combining the splitting of $W(x)$ with the two bounds just obtained, we get
\[
    W(x) \ge XV(z)\biggl(\frac{e^\gamma}{2}\log3-\frac34J(\alpha)-o(1)\biggr)-o\biggl(\frac{x}{\log x}\biggr).
\]
At $\alpha=1/6$ the constant becomes
\[
    \begin{split}
        \frac{e^\gamma}{2}\biggl(\log 3-\frac{3}{4} \log\frac{18}{5}+\frac{1}{12}\log\frac{15}{2}\biggr)
        &=\frac{e^\gamma}{24}\log\frac{3^{12}\cdot(15/2)}{(18/5)^9} > 0.
    \end{split}
\]
By continuity in $\alpha$ we may fix $\varepsilon > 0$ so small that
\[
    \frac{e^\gamma}{2}\log3-\frac34J(\alpha)>\frac{5e^\gamma}{48}\log\frac{25}{12},
\]
and then, for large $x$,
\[
    W(x) \ge\frac{5e^\gamma}{48}\log\frac{25}{12}\cdot XV(z) \gg \frac{x}{\log x},
\]
since $V(z)\asymp 1/\log z$.
This gives
\[
    \sum_{\substack{n\le x \\ n\equiv 3\pmod 4\\ \mu^2(n) = 1 \\ \omega(n) \le 7}} r(n-2)r(n+2) \gg \frac{x}{\log x},
\]
and by \eqref{sqfree-decomp} the left-hand side is at most $2s_{\Gamma}^{(7)}(x)$, which proves Theorem \ref{almostprimes-theorem} for large $x$. 
For bounded $x$ the desired conclusion follows trivially from $s^{(N)}_{\Gamma}(x) \ge s^{(N)}_{\Gamma}(3) > 0$.

\subsection*{Acknowledgements}

The author would like to thank Andrew Granville, Dimitris Koukoulopoulos and Igor Shparlinski for insightful discussions at various stages of this work.

\bibliographystyle{abbrv}

\bibliography{bibliography}{}

\end{document}